\numberwithin{equation}{section}
\newcommand{\be}{\begin{eqnarray}}
\newcommand{\ee}{\end{eqnarray}}
\newcommand{\ce}{\begin{eqnarray*}}
\newcommand{\de}{\end{eqnarray*}}
\newtheorem{theorem}{Theorem}[section]
\newtheorem{lemma}[theorem]{Lemma}
\newtheorem{proposition}[theorem]{Proposition}
\newtheorem{corollary}[theorem]{Corollary}
\theoremstyle{remark}
\newtheorem{assumption}[theorem]{Assumption}
\newtheorem{example}[theorem]{Example}
\newtheorem{remark}[theorem]{Remark}
\newtheorem{definition}[theorem]{Definition}
\def\p{\partial}
\def\[{{\Big[}}
\def\]{{\Big]}}
\def\<{{\langle}}
\def\>{{\rangle}}
\def\({{\Big(}}
\def\){{\Big)}}
\def\bx{{\mathbf{x}}}
\def\dif{{\mathord{{\rm d}}}}
\def\={&\!\!=\!\!&}
\def\bt{\begin{theorem}}
\def\et{\end{theorem}}
\def\bl{\begin{lemma}}
\def\el{\end{lemma}}
\def\br{\begin{remark}}
\def\er{\end{remark}}
\def\bd{\begin{definition}}
\def\ed{\end{definition}}
\def\bp{\begin{proposition}}
\def\ep{\end{proposition}}
\def\bc{\begin{corollary}}
\def\ec{\end{corollary}}
\def\bx{\begin{example}}
\def\ex{\end{example}}
\def\cM{{\mathcal M}}
\def\mE{{\mathbb E}}
\def\mH{{\mathbb H}}
\def\mI{{\mathbb I}}
\def\mL{{\mathbb L}}
\def\mP{{\mathbb P}}
\def\mR{{\mathbb R}}
\def\sF{{\mathscr F}}
\def\sH{{\mathscr H}}
\def\geq{\geqslant}
\def\leq{\leqslant}
\begin{document}
	\title{A Wong-Zakai theorem for SDEs with singular drift}
	
	\date{}
	
\author{Chengcheng Ling, Sebastian Riedel and Michael Scheutzow}

\address{Chengcheng Ling:
Technische Universit\"at Berlin,
Fakult\"at II, Institut f\"ur Mathematik,
10623 Berlin, Germany
\\
Email: ling@math.tu-berlin.de
 }

\address{Sebastian Riedel:
Leibniz Universit\"at Hannover,
Institute of Analysis,
30167 Hannover, Germany
\\
Email: riedel@math.uni-hannover.de
}

\address{Michael Scheutzow:
Technische Universit\"at Berlin,
Fakult\"at II, Institut f\"ur Mathematik,
10623 Berlin, Germany
\\
Email: ms@math.tu-berlin.de
}

	\begin{abstract}
	We study stochastic differential equations (SDEs) with multiplicative Stratonovich-type noise of the form
\begin{align}\label{SDE}
dX_t = b(X_t) \, \dif t + \sigma(X_t)\circ\dif W_t,\quad X_0=x_0\in\mathbb{R}^d, \quad t\geq0,
\end{align}
with a possibly singular drift  $b\in L^{{p}}(\mathbb{R}^d)$, $p>d$ and $p\geq 2$, and show that such SDEs can be approximated by random ordinary differential equations by smoothing the noise and the singular drift at the same time. We further prove a support theorem for this class of SDEs in a rather simple way using the Girsanov theorem.

		
		\bigskip
		
		\noindent {{\bf AMS 2020 Mathematics Subject Classification:} 60H10, 60F15, 60J60.}
		
		\noindent{{\bf Keywords:} singular stochastic differential equations; stability with respect to singular drifts; Wong-Zakai approximation; Zvonkin's transformation; support theorem.}
	\end{abstract}
	
	\maketitle
	
\section{Introduction}
Consider the following stochastic differential equation (abbreviated as SDE) in $\mR^d$:
\begin{align}\label{sde00}
\dif X_t = b(t,X_t) \,\dif t + \sigma(t,X_t)\,\dif W_t,\quad X_0=x_0\in\mathbb{R}^d,\quad t\geq0,
\end{align}
where $d\geq1$, $b=(b^{(i)})_{1\leq i\leq d}: \mR_+\times\mR^d\rightarrow\mR^d $ and $\sigma=(\sigma_{ij})_{1\leq i,j\leq d}: \mR_+\times\mR^d\rightarrow L(\mathbb{R}^d)$ $(:=d\times d$ real valued matrices$)$ are Borel measurable, and $(W_t)_{t\geq0}$ is a standard $d$-dimensional Brownian motion defined on some filtered probability space $(\Omega,\sF,(\sF_t)_{t\geq 0},\mP)$. Using Picard iteration, it is easy to see that the corresponding ordinary differential equation (abbreviated as ODE)
 \begin{align}\label{ode00}
\dif X_t=b(t,X_t) \, \dif t,\quad X_0=x_0\in\mathbb{R}^d, \quad t\geq0,
\end{align}
has a unique solution provided that $b$ is Lipschitz continuous. With a similar argument, under suitable growth and regularity assumptions on $\sigma$, it can be shown that there exists a unique strong solution to the SDE \eqref{sde00}, assuming again that $b$ is Lipschitz continuous. On the other hand, when $b$ fails to be Lipschitz continuous, e.g. only $\alpha$-H\"older with $\alpha \in (0,1)$, it is well known that the ODE \eqref{ode00} is generally ill-posed. Interestingly, due to a 'regularization by noise' effect, the SDE \eqref{sde00} defined with a very singular $b$, maybe even discontinuous, might still be well-posed and possesses a unique solution even in the strong sense. There are many works on this aspect. In the following, we mention a few of them. A remarkable result due to Zvonkin \cite{Z} shows that if $d=1$, $\sigma$ is uniformly elliptic, $b$ and $\sigma$ are bounded, and $\sigma$ is continuous with respect to space and time variables,  then the SDE (\ref{sde00}) admits a unique strong solution $(X_t(x_0))_{t\geq0}$ for each $x_0\in\mR$. The key idea is to use a certain transformation of the SDE, nowadays called \emph{Zvonkin's transformation} in the literature.  Zvonkin's result was then extended to the multidimensional case by Veretennikov \cite{Ve}. 
A further generalization was obtained by  Krylov and  R\"ockner \cite{Kr-Ro} where existence and uniqueness of a strong solution for the SDE (\ref{sde00}) was shown when $\sigma$ is identity matrix in $\mR^d$ (i.e. the SDE is driven by additive noise) and 
\begin{align}
b\in L^q_{loc}(\mR_+;L^p_{loc}(\mR^d))\quad\text{with}\quad q,p\in[2,\infty)\quad\text{and}\quad  d/p+2/q<1.    \label{pq}
\end{align}
 Besides, Fedrizzi and Flandoli \cite{FF1} introduced a new method to prove existence and uniqueness of a global strong solution to the SDE \eqref{sde00} by using regularizing properties of solutions to the Kolmogorov equation corresponding to \eqref{sde00}, assuming that \eqref{pq} holds globally. For a well-posedness result for \eqref{sde00} driven by {multiplicative noise} (i.e. $\sigma$ is not a constant matrix only), we refer to Zhang \cite{Zhang2011} who applied {Zvonkin's transformation} and Von der L\"uhe \cite{K} where Fedrizzi and Flandoli's method in \cite{FF1} was generalized to the multiplicative noise case. More recently, Zhang and Zhao \cite{ZZ} obtained a well-posedness result for the martingale problem induced by the SDE \eqref{sde00} when the drift is a Schwartz distribution using Zvonkin's transformation. Another way to investigate singular SDEs is rough paths theory: Harang and Perkowski in \cite{FHP},  Catellier and   Gubinelli in \cite{CG}, and Galeati and Gubinelli in \cite{GG} showed the pathwise uniqueness for a large class of singular SDEs.\\

 A Wong-Zakai approximation to an SDE is given by a series of random ODEs which converge to the original equation. For classical SDEs, the approximating random ODEs can be obtained by just smoothing the noise. The first results for classical SDEs are contained in the works by Wong and Zakai \cite{WZK1,WZK2}. In \cite[Theorem 2]{WZK1}, it was proved that in dimension one, under the assumptions that $b \in\mathcal{C}_b^1([0,T]\times\mathbb{R})$ and $\sigma \in\mathcal{C}_b^2([0,T]\times\mathbb{R})$, if $(w^n_t)_{n\geq1}$ is a continuous and piecewise smooth sequence of approximations to the Brownian motion $(W_t)_{t\geq0}$, then the solutions $X_t^{w^n}$ to the following equation
 $$\dif X_t^{w^n} = b(t,X_t^{w^n}) \, \dif t + \sigma(t,X_t^{w^n}) \, \dif w^n_t,\quad X_0=x_0\in\mathbb{R},\quad t\geq0,$$
converge to $X_t$ almost surely,  where $X_t$ solves
\begin{align}\label{sde00stro}
\dif X_t=b(t,X_t)\dif t + \sigma(t,X_t)\circ\dif W_t,\quad X_0=x_0\in\mathbb{R},\quad t\geq0.
\end{align}
Here $\sigma(t,X_t)\circ \dif W_t:=\sigma(t,X_t) \, \dif W_t + \frac{1}{2}\sigma(t,X_t)\frac{\partial \sigma(t,X_t)}{\partial x} \, \dif t$, i.e. the $\circ$ integral denotes the usual Stratonovich integral, and $\frac{1}{2}\sigma(t,X_t)\frac{\partial \sigma(t,X_t)}{\partial x} \, \dif t$ is the It\^o correction term. The fact that the limiting SDE has to be understood in Stratonovich sense is not surprising: if the approximating equations satisfy the usual chain rule, the same should be true for the limiting equation. 
In the multidimensional case, one needs to be careful about how approximating the noise. For instance, P. L\'evy showed that if $(w^{n,(1)}, w^{n,(2)})$ denotes the piecewise linear approximation to a two-dimensional Brownian motion $(W^{(1)},W^{(2)})$, one has
\begin{align*}
    \frac{1}{2} \int_0^t w^{n,(1)}_s\, \dif w^{n,(2)}_s -  w^{n,(2)}_s\, \dif w^{n,(1)}_s \to \frac{1}{2} \int_0^t W^{(1)}_s \circ \dif W^{(2)}_s - W^{(2)}_s \circ \dif W^{(1)}_s.
\end{align*}
The integral on the right hand side is called \emph{L\'evy area}. From this convergence, one can deduce that approximating the Brownian motion piecewise linearly in a multidimensional SDE converges, again, to the Stratonovich version of the SDE. However, E.~J.~McShane \cite{Mcshane} gave an example of an approximation to a two-dimensional Brownian motion for which the limit is \emph{not} the usual L\'evy area, but the area plus another correction term (see Example \ref{examapp} below). This implies that a multidimensional SDE with this approximation will not converge to the usual Stratonovich SDE. More results and surveys on this topic can be found in \cite{INY}, \cite[Chapter 7]{Ikeda},  \cite{GP} and the references therein. The study of Wong-Zakai approximations was further extended to stochastic partial differential equations (abbreviated as SPDEs), too, cf. e.g. \cite{KT}, \cite{MZ}, and \cite{HP}. In \cite{GS} and \cite{GPS}, Gy\"ongy, Shmatkov and Stinga showed that the convergence rate of the Wong-Zakai approximation for SPDEs is $\frac{1}{2}$, which is the same as for SDEs.\\

 Wong-Zakai approximations bridge the gap between the ODE and the SDE world. They are important for many purposes, e.g. in numerics or in subjects concerning the limiting behavior of SDEs. However, all known results exclusively treat SDEs with regular coefficients, i.e. $\sigma$ and $b$ were at least assumed to be Lipschitz continuous. In this article, we are interested in studying the case when the coefficients in the SDE might not be continuous at all but only belong to some $L^p$-space with $p>d$. Then immediately we are in the problematic situation that the approximating equation may not be well-defined for singular coefficients if we simply smooth out the noise. In fact, the roughness of the noise is essential to render the equation well-defined, most visible in the above cited works where a pathwise rough path approach is used. In order to handle this issue, we approximate the singular drift by a smooth one at the same time. Using a stability result that we obtain for singular SDEs (see Theorem \ref{sta} below), we can show that the solution of the approximated equation converges to the original singular SDE in the mean square sense. In the following, we loosely formulate our main result, the precise formulation can be found in Theorem \ref{Wzkthy}.
 
\begin{theorem}\label{thm:intro}
    Let $b$ and $\sigma$ be coefficients defining a singular SDE. In particular, $|b|\in  L^p(\mathbb{R}^d)$, $p\in(d,\infty)$ and $p\geq 2$. Let $(W_t^n)_{n\geq1}$ resp. $(b_n)_{n \geq 1}$ be  suitable approximations for a $d$-dimensional Brownian motion $W_t$ resp. for the coefficient $b$. Assume that $W^n$ converges sufficiently fast to $W$ compared to the convergence of $b_n$ to $b$. Then the solution $(X_t)_{t \geq 0}$ to the singular equation
\begin{align}\label{sdeint}
X_t=x_0+\int_0^tb(X_s) \, \dif s+\int_0^t\sigma(X_s) \, \dif W_s+\Big(\sum_{i,j=1}^d\sum_{l=1}^dc_{ij}\int_0^t\sigma_{il}\cdot\nabla_l \sigma_{jk}(X_s) \, \dif s\Big)_{k=1,\cdots,d}
\end{align}
is the mean square limit of the solution to the equation
\begin{align}\label{odeint}
X_t^{n}=x_0+\int_0^tb_n(X_s^{n})\dif s+\int_0^t\sigma(X_s^{n})\dif W_{s}^n.
\end{align}
Above, $(c_{ij})_{1\leq i,j\leq d}$ are coefficients depending on the approximating sequence $(W_t^n)_{n\geq1}$.
\end{theorem}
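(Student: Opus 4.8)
The plan is to interpolate between the random ODE \eqref{odeint} and the singular SDE \eqref{sdeint} by inserting an intermediate process that already carries the smoothed drift $b_n$ but is still driven by the genuine Brownian motion $W$. Let $Y^n$ solve the equation obtained from \eqref{sdeint} by replacing $b$ with $b_n$, namely
\begin{align*}
Y_t^{n}=x_0+\int_0^t b_n(Y_s^{n})\,\dif s+\int_0^t\sigma(Y_s^{n})\,\dif W_s+\Big(\sum_{i,j=1}^d\sum_{l=1}^d c_{ij}\int_0^t\sigma_{il}\cdot\nabla_l\sigma_{jk}(Y_s^{n})\,\dif s\Big)_{k=1,\dots,d}.
\end{align*}
Measuring convergence in the mean square sense over $[0,T]$ and using the triangle inequality,
\begin{align*}
\Big(\mathbb{E}\sup_{t\leq T}|X_t^{n}-X_t|^2\Big)^{1/2}\leq \Big(\mathbb{E}\sup_{t\leq T}|X_t^{n}-Y_t^{n}|^2\Big)^{1/2}+\Big(\mathbb{E}\sup_{t\leq T}|Y_t^{n}-X_t|^2\Big)^{1/2},
\end{align*}
so it suffices to send each term on the right to $0$ by a separate mechanism.

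\textbf{The singular term.} The second difference compares two solutions of the \emph{same} singular Stratonovich equation driven by the \emph{same} noise $W$, whose drifts are $b_n$ and $b$. This is precisely the situation covered by the stability result, Theorem~\ref{sta}: as soon as $b_n\to b$ in $L^p(\mathbb{R}^d)$ it forces $\mathbb{E}\sup_{t\leq T}|Y_t^{n}-X_t|^2\to0$, and in fact Theorem~\ref{sta} should supply a quantitative rate, schematically $\mathbb{E}\sup_{t\leq T}|Y_t^{n}-X_t|^2\lesssim \|b_n-b\|_{L^p}^{\gamma}$ for some $\gamma>0$. Behind this estimate stands Zvonkin's transformation, which converts the singular drift into a regular one and which I would also use to obtain the uniform-in-$n$ a priori bounds needed to make the comparison rigorous.

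\textbf{The smooth Wong--Zakai term and the balance.} For fixed $n$ the coefficients $b_n$ and $\sigma$ are smooth, so the first difference is a classical Wong--Zakai problem in which only the noise varies, $W^n\to W$. I would invoke a quantitative smooth Wong--Zakai estimate (in the spirit of \cite{WZK1,Ikeda,GS}), taking care to track explicitly how the error depends on the regularity of the coefficients; the McShane-type correction with coefficients $c_{ij}$ enters exactly here, as the limit of the symmetrized iterated integrals of $W^n$, which is why the limit of $X^n$ with $b_n$ frozen is $Y^n$ and not some other Stratonovich equation (cf. \cite{Mcshane}). The resulting bound has the schematic form
\begin{align*}
\Big(\mathbb{E}\sup_{t\leq T}|X_t^{n}-Y_t^{n}|^2\Big)^{1/2}\leq C\big(\|b_n\|_{C^k},\|\sigma\|_{C^k}\big)\,\rho_n,
\end{align*}
where $\rho_n$ quantifies the speed of the noise approximation (mesh size of the interpolation together with the convergence of its L\'evy area) and $C$ depends polynomially on finitely many derivative norms of $b_n$ and $\sigma$. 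Since $b_n$ is a mollification of an $L^p$ function at scale $\delta_n$, its higher norms $\|b_n\|_{C^k}$ blow up like a negative power of $\delta_n$; the hypothesis that \emph{$W^n$ converges sufficiently fast compared to $b_n$} is exactly the requirement that $\rho_n$ decay fast enough to beat this blow-up, so that $C(\|b_n\|_{C^k},\|\sigma\|_{C^k})\,\rho_n\to0$. Combining the two steps yields the claim.

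\textbf{Main obstacle.} I expect the crux to be the smooth Wong--Zakai estimate with \emph{fully explicit} coefficient dependence: the standard statements treat the coefficients as fixed and bury these constants, whereas here I must identify which finitely many norms of $b_n$ enter and with what powers, in order to balance them against $\rho_n$. A secondary difficulty is compatibility, namely ensuring that the Zvonkin change of variables underlying Theorem~\ref{sta} does not reintroduce an uncontrolled dependence on $\|b_n\|$ when the two error mechanisms are combined.
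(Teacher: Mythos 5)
Your proposal is correct and follows essentially the same route as the paper: your intermediate process $Y^n$ is exactly the paper's $X_{n,\cdot}$, the singular term is handled by the stability result (Theorem \ref{main1}, proved via Zvonkin's transformation), the smooth term by the quantitative Wong--Zakai estimate with explicit drift dependence (Theorem \ref{smoothdrift}), and your ``sufficiently fast'' balancing requirement is precisely condition \eqref{hfn}. The only inaccuracy is cosmetic: the coefficient dependence in the smooth Wong--Zakai bound is exponential in $\Vert b_n\Vert_{\mathcal{C}_b^1(\mathbb{R}^d)}^2$ (via Gronwall), not polynomial, but your balancing hypothesis absorbs this, and your worry about the Zvonkin step is resolved because the stability constant depends only on the $L^p$ norms of the drifts, which stay bounded along the approximation.
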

The above result shows that certain random, regular ODEs \eqref{odeint} converge to the singular SDE \eqref{sdeint}. The condition that the speed of convergence of the noise approximation has to be faster than for the vector field approximation seems plausible, but it is open for us whether this condition is really necessary.\\

A typical application for the Wong-Zakai theorem is the proof of a support theorem which describes the topological support of the law of an SDE solution \cite{SV, GP, N, BMS}. To our knowledge, there was no support theorem for singular SDEs in the literature until now. In principle, it is possible to mimic the proof of the classical support theorem for regular SDEs using Theorem \ref{thm:intro}. However, it turns out that there is a much simpler proof, using a change of measure method to eliminate the drift which allows to apply a well-established support theorem for non-degenerate diffusions \cite{SV}. Here is a short form of our support theorem, cf. Theorem \ref{suppthy}.

\begin{theorem}
It holds that
$$\textup{supp } \mu=\mathcal{C}([0,T],\mathbb{R}^d),$$
where $\textup{supp }\mu$ is the topological support of the distribution of the solution $(X_t)_{t\geq0}$ to the singular SDE \begin{align*}
X_t=x_0+\int_0^tb(s,X_s)\dif s+\int_0^t\sigma(s,X_s)\circ\dif W_s.
\end{align*} 
\end{theorem}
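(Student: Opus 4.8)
The plan is to eliminate the singular drift $b$ by an equivalence of measure (Girsanov's theorem) and then to invoke the classical Stroock--Varadhan support theorem \cite{SV} for the resulting driftless non-degenerate diffusion. The structural fact that makes this strategy work is that a Girsanov change of measure produces a measure $\mQ$ that is \emph{equivalent} (mutually absolutely continuous) to $\mP$. Equivalent measures share the same null sets, and this property is preserved under pushing forward along the measurable solution map $X$. Consequently, the law $\mu=\Law_\mP(X)$ and the law $\tilde\mu:=\Law_\mQ(X)$ of the \emph{same} random path $X$ computed under the two measures are equivalent measures on $\mathcal C([0,T],\mR^d)$, and therefore possess identical topological support. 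It thus suffices to compute $\operatorname{supp}\tilde\mu$ for the driftless equation, where the support theorem is directly available.

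Concretely, I would first rewrite the Stratonovich SDE in It\^o form, splitting off the It\^o--Stratonovich correction term, which under the standing assumptions on $\sigma$ is a regular function of $\sigma$ and its derivatives. Using non-degeneracy of $\sigma$ (so that $\sigma^{-1}$ exists and is bounded), set $h_s:=\sigma^{-1}(s,X_s)\,b(s,X_s)$ and define
\begin{equation*}
\frac{\dif\mQ}{\dif\mP}=\exp\Big(-\int_0^T h_s\cdot \dif W_s-\tfrac12\int_0^T|h_s|^2\,\dif s\Big).
\end{equation*}
Provided this stochastic exponential is a genuine $\mP$-martingale, Girsanov's theorem shows that $\tilde W_t:=W_t+\int_0^t h_s\,\dif s$ is a $\mQ$-Brownian motion and that, under $\mQ$, the process $X$ solves the driftless equation $\dif X_t=\sigma(t,X_t)\circ \dif\tilde W_t$; note that the choice $h_s=\sigma^{-1}(s,X_s)b(s,X_s)$ removes exactly the singular drift and leaves the Stratonovich structure (hence the correction term) intact. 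For this equation with non-degenerate and sufficiently regular $\sigma$, the Stroock--Varadhan theorem yields $\operatorname{supp}\tilde\mu=\mathcal C([0,T],\mR^d)$, and the equivalence argument of the previous paragraph transfers the conclusion to $\mu$.

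The main obstacle is verifying that the candidate density is a true martingale rather than merely a local martingale, i.e.\ (by a Novikov-type criterion) that $\mE_\mP\big[\exp(\tfrac12\int_0^T|h_s|^2\,\dif s)\big]<\infty$. Since $\sigma^{-1}$ is bounded, one has $|h_s|\leq C\,|b(s,X_s)|$, so the task reduces to establishing exponential integrability of $\int_0^T|b(s,X_s)|^2\,\dif s$ for the singular drift $b\in L^{p}$, $p>d$. Here I would exploit the regularizing effect already underlying the singular-SDE theory: Krylov-type estimates bound $\mE_\mP\!\int_s^t |f(X_r)|\,\dif r$ by a constant times $\|f\|_{L^p}$ and a power of $(t-s)$, and combining this with Khasminskii's lemma on sufficiently short subintervals (on which the relevant integral is small) upgrades the first moment to the required exponential moment. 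This is precisely the step in which the assumption $p>d$, $p\geq 2$ enters, and it is the technical heart of the argument; once the martingale property is secured, the remainder is a clean assembly of Girsanov's theorem, the Stroock--Varadhan support theorem for the driftless diffusion, and the invariance of the topological support under equivalence of measures.
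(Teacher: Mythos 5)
Your proposal is correct and takes essentially the same approach as the paper: a Girsanov change of measure relating the singular SDE to the driftless equation $\dif Y_t=\sigma(t,Y_t)\circ\dif W_t$, the Stroock--Varadhan support theorem \cite{SV} for that non-degenerate diffusion, and a Krylov-estimate-plus-Khasminskii argument (the paper cites \cite[Lemma 4.1 (ii)]{XXZZ}) to verify Novikov's condition for the exponential density. The only difference is directional and cosmetic --- the paper defines the density along the driftless solution $Y$ so as to \emph{add} the drift, whereas you change measure along $X$ to \emph{remove} it --- and both directions rest on the same exponential-moment estimate and yield equivalence of the two laws on $\mathcal{C}([0,T],\mathbb{R}^d)$, hence equality of supports (which, as in the paper's precise statement, is $\Omega_{x_0}([0,T],\mathbb{R}^d)$, the paths started at $x_0$, rather than all of $\mathcal{C}([0,T],\mathbb{R}^d)$).
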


The structure of this paper is as follows: We introduce some notation and preliminaries in Section 2. In Section 3, we first obtain the stability of the solution to a singular SDE with multiplicative noise with respect to the singular drift by using Zvonkin's transformation. In the second part, we prove the Wong-Zakai Theorem for singular SDEs. The support theorem is shown in Section 4. The appendix collects some necessary results in PDE theory which are used when establishing the stability result in Section 2.

\section{Notation and Preliminaries}
We first introduce some  notation. We write $\mR^+:=[0,\infty)$ and $\mathbb{N}=\{1,2,...\}$.

For $v\in \mR^m$, let $v^{(i)}$, $i=1,\cdots,m$, be the $i$-th component of $v$ and  $|v|$ its $l^2$ Euclidean norm.  For a matrix $\sigma\in\mR^{m\times n}$, we denote its components by $\sigma_{ij}$,  $i=1,\cdots, m$, $j=1,\cdots,n$. We use $\sigma^*$ to denote its transpose and $\Vert\sigma\Vert$ is its Euclidean norm in $\mR^{m\times n}$, i.e.~the root of the sum of the squares of all entries.

For $p$, $q\in[1,\infty)$ and $0\leq S<T\leq \infty$, let $\mathbb{L}_p^q(S,T)$ denote the space of all real Borel measurable functions on $[S,T)\times\mathbb{R}^d$ with the norm
$$\Vert f\Vert_{\mathbb{L}_p^q(S,T)}:=\bigg(\int_S^T\big(\int_{\mathbb{R}^d}|f(t,x)|^p\dif x\big)^{q/p}\, \dif t\bigg)^{1/q}<+\infty.$$
 For simplicity, we write
$$\mathbb{L}_p^q=\mathbb{L}_p^q(\mathbb{R}_+),\quad\mathbb{L}_p^q(T)=\mathbb{L}_p^q(0,T),\quad\mathbb{L}_p^{q,loc}=L^q_{loc}(\mathbb{R}_+,L^p(\mathbb{R}^d)).$$

Let $\mathcal{C}([0,\infty),\mathbb{R}^d)$ denote the space of all continuous $\mathbb{R}^d$-valued functions defined on $[0,\infty)$. By $\mathcal{C}_b^n(\mathbb{R}^d)$ we mean the set of all bounded $n$ times continuously differentiable functions on $\mathbb{R}^d$ with bounded derivatives up to order $n$.  $\mathcal{C}(\mathbb{R}^d)$ collects all continuous functions defined on $\mathbb{R}^d$. For $f\in L_{loc}^1(\mathbb{R}^d)$ we define $\nabla_jf(x):=\frac{\partial f}{\partial x_j}(x)$ and  $\nabla f:=(\nabla_i f)_{1\leq i\leq d}$ denotes the gradient of $f$, $\partial_{ij}^2f(x):=\frac{\partial^2f}{\partial x^{(i)}\partial x^{(j)}}$. Here the derivatives are meant in the sense of distributions. Below when we write $\nabla f$ for a function depending on $(t,x)\in\mathbb{R}^{d+1}$, we always mean $\nabla_xf(t,x)$, i.e.~the derivative with respect to $x\in\mathbb{R}^d$, and $\partial_tf$ denotes the derivative with respect to $t\in\mathbb{R}_+$.
For any $\alpha\in\mR$ and ${ p}\in[1,\infty)$, let $H_{ p}^\alpha(\mathbb{R}^d):=(1-\Delta)^{-\alpha/2}\big(L^{ p}(\mR^d)\big)$
be the usual Bessel potential space with norm $
\|f\|_{H_{ p}^\alpha(\mathbb{R}^d)}:=\|(\mI-\Delta)^{\alpha/2}f\|_{L^{ p}(\mathbb{R}^d)},
$
where  $(\mI-\Delta)^{\alpha/2}f$ is defined through Fourier's  transform.
For  $\alpha\in\mR$, let $
\mH_{\alpha,{ p}}^q(T):=L^q\big([0,T];H^{\alpha}_{ p}(\mR^d)\big),
$
and the space $\sH_{\alpha,{ p}}^q(T)$ consists of the functions $u=u(t)$ on $[0,T]$ with values in the space of distributions on $\mR^d$ such that $u\in \mH_{\alpha,{ p}}^q(T)$ and $\p_t u\in \mL^q_{ p}(T)$.

In the following,  $\int_0^t\sigma(s, X_s)\circ\dif W_s$ is understood as Stratonovich integral, i.e.
$$\int_0^t\sigma(s,X_s)\circ\dif W_s:=\int_0^t\sigma(s,X_s) \, \dif W_s+\frac{1}{2}\int_0^t\sigma(s,X_s)\frac{\partial \sigma(s,X_s)}{\partial x}\, \dif s,$$
where $\int_0^t\sigma(s,X_s)\, \dif W_s$ is the It\^o integral whenever it makes sense.

\section{Wong-Zakai Approximation for SDEs with singular drift driven by multiplicative noise}

The proof of our main result consists of two steps. First, we prove a stability result for singular SDEs that shows that singular SDEs can be approximated by ``classical'' SDEs. Then, we formulate a quantitative version of the Wong-Zakai theorem. Both results together yield our main theorem. We start with step 1.

\subsection{Stability of SDEs with singular drift driven by multiplicative noise}
We present a stability result under the following condition:
\begin{assumption}\label{ass}For $\frac{d}{p}+\frac{2}{q}<1$, $p,q\in[2,\infty)$,
\begin{itemize}
\item[\bf($H_b$)] $b:\mR^+ \times \mR^d\to \mR^d$ is measurable and $|b|\in \mL_p^{q,loc}$.
\item [\bf($H_\sigma$)] $\sigma:\mR^+  \times \mR^d \to \mR^{d \times d}$ is measurable, $|\nabla\sigma|\in \mL_p^{q,loc}$, and there exists $K\geq 1$ such that for all $(t,x)\in[0,\infty)\times\mathbb{R}^d$, $$K^{-1}|\xi|^2\leq(\sigma\sigma^*(t,x)\xi,\xi)\leq K|\xi|^2,\quad \forall \xi\in\mathbb{R}^d.  $$  Further, $a:=\sigma\sigma^*$  is uniformly continuous in $x\in\mathbb{R}^d$ locally uniformly with respect to $t\in[0,\infty)$.
    \end{itemize}
    \end{assumption}
     Concerning existence and uniqueness of a global strong solution   to \eqref{sde00}, \cite{XXZZ} shows the following result.
    \begin{theorem}\cite[Theorem 1.1]{XXZZ} \label{XY}
Suppose  the conditions  in Assumption \ref{ass} hold. Then, for any $x\in\mathbb{R}^d$, there exists a unique strong solution $(X_t(x))_{t\geq0}$ to the SDE \eqref{sde00} such that
\begin{align*}
\mP\left\{\omega:\int_0^T|b(r,X_r(\omega))|\, \dif r+\int_0^T|\sigma(r,X_r(\omega))|^2 \, \dif r<\infty,\forall T\in[0,\infty)\right\}=1,
\end{align*}
and, almost surely,
\begin{align*}
X_t=x+\int_0^tb(r,X_r) \, \dif r+\int_0^t\sigma(r,X_r) \, \dif W_r, \quad \forall t\in [0,\infty).
\end{align*}
\end{theorem}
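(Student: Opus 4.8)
The plan is to remove the singular drift by \emph{Zvonkin's transformation} \cite{Z, Ve, Zhang2011}, thereby reducing \eqref{sde00} to an equivalent SDE whose drift is Lipschitz and whose diffusion coefficient is non-degenerate and of Sobolev class, and then to establish well-posedness of the transformed equation and transport it back through the (bi-Lipschitz) transformation. Throughout, the basic a priori tool is \emph{Krylov's estimate}: for any solution $X$ and any $f\in\mL_p^q(T)$,
\begin{equation*}
\mE\int_0^T f(s,X_s)\,\dif s\leq C\|f\|_{\mL_p^q(T)},
\end{equation*}
which follows from the Gaussian bounds for the non-degenerate principal part together with the ellipticity and the continuity of $a=\sigma\sigma^*$ in Assumption \ref{ass}. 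This estimate renders $\int_0^T|b(s,X_s)|\,\dif s$ and the stochastic integral finite and is used repeatedly below.

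Next comes the PDE step. For $\lambda>0$ I would solve, componentwise, the backward Kolmogorov system
\begin{equation*}
\partial_t u+\tfrac12\sum_{i,j}a_{ij}\,\partial_{ij}^2u+b\cdot\nabla u-\lambda u=-b,\qquad u(T,\cdot)=0,
\end{equation*}
for $u=(u^{(1)},\dots,u^{(d)})\colon[0,T]\times\mR^d\to\mR^d$. The singular term $b\cdot\nabla u$ is treated as a perturbation of the principal part via a fixed-point argument, using the maximal $L^q$-$L^p$ regularity estimates collected in the appendix; here the uniform continuity of $a$ in $x$ is what enables the freezing-of-coefficients argument. Under $\tfrac{d}{p}+\tfrac{2}{q}<1$ the solution lies in $\sH_{2,p}^q(T)$, and the corresponding parabolic Sobolev embedding yields that $u$ and $\nabla u$ are bounded and H\"older continuous, with $\|\nabla u\|_\infty\to0$ as $\lambda\to\infty$. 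Fixing $\lambda$ large enough that $\|\nabla u\|_\infty\leq\tfrac12$, the map $\Phi_t:=\mathrm{id}+u(t,\cdot)$ is, for each $t$, a $C^1$ bi-Lipschitz diffeomorphism of $\mR^d$ with $\nabla\Phi$ and $\nabla\Phi^{-1}$ bounded.

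Applying the generalized It\^o formula to $Y_t:=\Phi_t(X_t)$ (justified by mollifying $u$ and passing to the limit with the help of Krylov's estimate), the singular drift cancels and $Y$ solves
\begin{equation*}
\dif Y_t=\tilde b(t,Y_t)\,\dif t+\tilde\sigma(t,Y_t)\,\dif W_t,
\end{equation*}
with $\tilde b(t,y)=\lambda u(t,\Phi_t^{-1}(y))$ and $\tilde\sigma(t,y)=[(I+\nabla u)\sigma](t,\Phi_t^{-1}(y))$. By construction $\tilde b$ is bounded and Lipschitz in $x$, while $\tilde\sigma$ remains uniformly elliptic and satisfies $|\nabla\tilde\sigma|\in\mL_p^{q,loc}$. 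Weak existence for this equation follows from the classical Stroock--Varadhan theory \cite{SV}, since the coefficients are bounded, continuous and non-degenerate. For pathwise uniqueness I would take two solutions $Y^1,Y^2$ driven by the same $W$, apply It\^o's formula to $|Y^1_t-Y^2_t|^2$, and control the diffusion difference with the maximal-function estimate
\begin{equation*}
|\tilde\sigma(t,y_1)-\tilde\sigma(t,y_2)|\leq C|y_1-y_2|\big(\cM|\nabla\tilde\sigma|(t,y_1)+\cM|\nabla\tilde\sigma|(t,y_2)\big),
\end{equation*}
valid for Sobolev functions, where $\cM$ is the Hardy--Littlewood maximal operator. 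The exponent condition $\tfrac{d}{p}+\tfrac{2}{q}<1$ is exactly what makes $(\cM|\nabla\tilde\sigma|)^2\in\mL_{p/2}^{q/2,loc}$ meet the integrability condition $\tfrac{2d}{p}+\tfrac{4}{q}<2$ required for Krylov's estimate, so a stochastic Gronwall argument (applied, e.g., to $\mE[|Y^1_t-Y^2_t|^2e^{-A_t}]$ with $A_t$ accumulating the maximal-function terms) yields $Y^1=Y^2$. Pathwise uniqueness together with weak existence gives, by Yamada--Watanabe, a unique strong solution $Y$.

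Finally I would transform back: since $\Phi_t$ is a bijection, $X_t=\Phi_t^{-1}(Y_t)$ is the unique strong solution of \eqref{sde00} on $[0,T]$, and letting $T$ be arbitrary yields the global solution; the integrability assertion $\int_0^T(|b(r,X_r)|+|\sigma(r,X_r)|^2)\,\dif r<\infty$ follows from Krylov's estimate together with $|b|\in\mL_p^{q,loc}$ and the boundedness of $a$. The main obstacle is the interplay of the PDE step with the uniqueness argument: one must establish maximal regularity for the Kolmogorov equation sharp enough to produce a \emph{continuous} gradient $\nabla u$ with the required smallness under merely $L^q_tL^p_x$ drift and only continuous (not smooth) $a$, and then deal with the fact that $\tilde\sigma$ is only of Sobolev class rather than Lipschitz, which forces the maximal-function technique above in place of a naive Gronwall estimate.
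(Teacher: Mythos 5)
Your proposal is correct in outline, and it is essentially the proof of \cite[Theorem 1.1]{XXZZ}: note that the paper itself does not prove this statement but imports it, with the supporting machinery (the Kolmogorov equation of Theorem \ref{pde} and the Zvonkin map of Lemma \ref{hom}) collected in the appendix. The one genuine divergence is in how you de-singularize: you solve the resolvent equation $\partial_t u+\tfrac12 a_{ij}\partial^2_{ij}u+b\cdot\nabla u-\lambda u=-b$ for $\lambda$ large, which leaves a transformed SDE with a bounded Lipschitz drift $\tilde b=\lambda u\circ\Phi_t^{-1}$, whereas the paper (following \cite{LX}, \cite{XXZZ}, \cite{Zhang2017}) solves $\partial_t u+\tfrac12 a_{ij}\partial^2_{ij}u+b\cdot\nabla u+b=0$ on a short horizon, exploiting $\lim_{T\to 0}C_T=0$ in \eqref{es3} to get $\|\nabla u\|_\infty\leq\tfrac12$, so that $Y_t=\Phi(t,X_t)$ solves the \emph{driftless} equation $\dif Y_t=\tilde\sigma(t,Y_t)\,\dif W_t$ and the global solution is then obtained by patching over small intervals via pathwise uniqueness. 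The $\lambda$-route buys you one global-in-time PDE and no patching; the small-$T$ route buys a cleaner transformed equation (no drift at all), which is what the stability argument of Theorem \ref{main1} actually exploits in its Step 1. Your remaining ingredients --- Krylov's estimate to give meaning to $\int_0^T|b(r,X_r)|\,\dif r$ and to justify the generalized It\^o formula by mollification, weak existence for the non-degenerate transformed equation, pathwise uniqueness for the Sobolev diffusion $\tilde\sigma$ via the Hardy--Littlewood maximal-function inequality and a stochastic Gronwall/Khasminskii exponential bound, then Yamada--Watanabe and transforming back through the bi-Lipschitz $\Phi_t$ --- coincide with the scheme of \cite{XXZZ} and \cite{Zhang2017}; your exponent bookkeeping ($\tfrac{2d}{p}+\tfrac{4}{q}<2$ for the squared maximal function) is the right sharp point, and your closing caveat about needing continuity of $\nabla u$ under merely uniformly continuous $a$ is exactly where the freezing-of-coefficients maximal regularity of Theorem \ref{pde} is used.
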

Next, we formulate the stability result.
\begin{theorem}\label{main1}
  Assume that $b^1,b^2$ satisfy {\bf($H_{b^1}$)}, {\bf($H_{b^2}$)} respectively, and $\sigma$ satisfies {\bf($H_\sigma$)}. Let $(X_t^1(x_1))_{t\geq0}$ and $(X_t^2(x_2))_{t\geq0}$ be the unique strong solutions to the  equations 
\begin{align}\label{sde1}
dX_t^1=b^1(t,X_t^1)\, \dif t + \sigma(t,X_t^1) \, \dif W_t,\quad X_0^1=x_1\in\mathbb{R}^d,\quad t\geq0,
\end{align}
and
\begin{align}\label{sde2}
dX_t^2=b^2(t,X_t^2) \, \dif t + \sigma(t,X_t^2)\, \dif W_t,\quad X_0^2=x_2\in\mathbb{R}^d,\quad t\geq0.
\end{align}
Then, for each $T>0$, 
there exist positive constants $C$  and $C'$ such that
\begin{align}\label{sta}
\mE\(\sup_{s\in[0,T]}|X_s^1(x_1)-X_s^2(x_2)|^2\)\leq C\Vert b^1-b^2\Vert_{\mL_p^q(T)}^2+C'|x_1-x_2|^2
\end{align}
where $C$ and $C'$ may depend on $T,d,p,q,K,\Vert \nabla\sigma\Vert_{\mL_p^q(T)},\Vert b^1\Vert_{\mL_p^q(T)}$, and $\Vert b^2\Vert_{\mL_p^q(T)}$.
\end{theorem}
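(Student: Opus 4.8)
The plan is to apply Zvonkin's transformation built from the drift $b^1$ in order to convert both equations \eqref{sde1} and \eqref{sde2} into SDEs with Lipschitz coefficients, so that \eqref{sta} reduces to a classical stability estimate plus one forcing term carrying the $b^1-b^2$ dependence. Fix $T>0$ and a large $\lambda>0$, and solve the backward Kolmogorov system
\begin{equation*}
\partial_t u+\tfrac12\sum_{i,j=1}^d a_{ij}\partial_{ij}^2 u+b^1\cdot\nabla u-\lambda u=-b^1,\qquad u(T,\cdot)=0,
\end{equation*}
componentwise, where $a=\sigma\sigma^*$. Using the maximal $L^p$-regularity for this equation (the PDE estimates gathered in the appendix), for $\lambda$ large enough depending on $T,d,p,q,K,\|\nabla\sigma\|_{\mL_p^q(T)}$ and $\|b^1\|_{\mL_p^q(T)}$ one obtains $u\in\sH^q_{2,p}(T)$; the condition $d/p+2/q<1$ then yields, via the embedding $\sH^q_{2,p}(T)\hookrightarrow C([0,T];C^{1,\alpha}(\mR^d))$, that $\nabla u$ is bounded and H\"older continuous in $x$ uniformly on $[0,T]$ and that $\lambda$ can be chosen so that $\|\nabla u\|_\infty\le\tfrac12$. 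Hence $\Phi(t,x):=x+u(t,x)$ is, for each $t\in[0,T]$, a $C^1$-diffeomorphism of $\mR^d$ whose inverse is globally Lipschitz with constant $\le2$, uniformly in $t$.

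Next I would apply the generalized It\^o formula (valid for $u\in\sH^q_{2,p}(T)$ along the non-degenerate diffusions, as in the Zvonkin/Krylov theory) to $Y^i_t:=\Phi(t,X^i_t)$, $i=1,2$. Since $(\partial_t+\tfrac12 a_{ij}\partial_{ij}^2+b^1\cdot\nabla)u=\lambda u-b^1$ and applying the generator to the identity map returns the corresponding drift, one finds
\begin{align*}
dY^1_t&=\lambda u(t,X^1_t)\,\dif t+[(I+\nabla u)\sigma](t,X^1_t)\,\dif W_t,\\
dY^2_t&=\big[\lambda u+(I+\nabla u)(b^2-b^1)\big](t,X^2_t)\,\dif t+[(I+\nabla u)\sigma](t,X^2_t)\,\dif W_t.
\end{align*}
Writing $X^i_t=\Phi^{-1}(t,Y^i_t)$ and setting $\tilde b(t,y):=\lambda u(t,\Phi^{-1}(t,y))$, $\tilde\sigma(t,y):=[(I+\nabla u)\sigma](t,\Phi^{-1}(t,y))$ --- both Lipschitz in $y$ uniformly in $t$, since $\nabla u$ is bounded and H\"older, $\sigma$ is non-degenerate and bounded with controlled gradient, and $\Phi^{-1}$ is Lipschitz --- the difference solves
\begin{align*}
d(Y^1_t-Y^2_t)={}&\big[\tilde b(t,Y^1_t)-\tilde b(t,Y^2_t)\big]\,\dif t+\big[(I+\nabla u)(b^1-b^2)\big](t,X^2_t)\,\dif t\\
&+\big[\tilde\sigma(t,Y^1_t)-\tilde\sigma(t,Y^2_t)\big]\,\dif W_t.
\end{align*}

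Then I would estimate $\mE[\sup_{s\le t}|Y^1_s-Y^2_s|^2]$. The Lipschitz drift and diffusion terms are handled in the usual way by Cauchy--Schwarz and the Burkholder--Davis--Gundy inequality, producing a term $\int_0^t\mE[\sup_{r\le s}|Y^1_r-Y^2_r|^2]\,\dif s$, while the initial datum contributes $|Y^1_0-Y^2_0|^2\le\tfrac94|x_1-x_2|^2$. The only genuinely new contribution is the forcing term, for which
\begin{equation*}
\mE\Big[\sup_{s\le T}\Big|\int_0^s[(I+\nabla u)(b^1-b^2)](r,X^2_r)\,\dif r\Big|^2\Big]\le\big(1+\|\nabla u\|_\infty\big)^2\,\mE\Big[\Big(\int_0^T|b^1-b^2|(r,X^2_r)\,\dif r\Big)^2\Big].
\end{equation*}
Here I would invoke Krylov's estimate for the non-degenerate diffusion $X^2$ in its squared form, which under $d/p+2/q<1$ gives $\mE[(\int_0^T|f|(r,X^2_r)\,\dif r)^2]\le C\|f\|_{\mL_p^q(T)}^2$ with $C$ depending on $T,d,p,q,K$ and $\|b^2\|_{\mL_p^q(T)}$; taking $f=b^1-b^2$ bounds the forcing term by $C\|b^1-b^2\|_{\mL_p^q(T)}^2$. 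Gronwall's lemma then yields $\mE[\sup_{s\le T}|Y^1_s-Y^2_s|^2]\le C(|x_1-x_2|^2+\|b^1-b^2\|_{\mL_p^q(T)}^2)$, and transferring back through $|X^1_s-X^2_s|\le2|Y^1_s-Y^2_s|$ gives \eqref{sta}.

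The two steps I expect to demand the most care are the PDE input and the squared Krylov bound. For the former, the crux is choosing $\lambda$ large enough to force $\|\nabla u\|_\infty\le\tfrac12$ uniformly on $[0,T]$, so that $\Phi$ is bi-Lipschitz and $\tilde b,\tilde\sigma$ are genuinely Lipschitz; this rests on maximal regularity for the Kolmogorov equation together with the embedding into $C([0,T];C^{1,\alpha})$, which is exactly where $d/p+2/q<1$ enters and which explains the dependence of the constants on $K,\|\nabla\sigma\|_{\mL_p^q(T)}$ and $\|b^1\|_{\mL_p^q(T)}$. For the latter, the obstacle --- and the genuinely new ingredient of the stability estimate --- is producing the second-moment Krylov estimate for the singular diffusion $X^2$ with a constant depending only on the quantities listed; this can be obtained either directly from the $\mL_p^q$-Krylov inequality combined with a Khasminskii-type argument for the second moment, or by first passing to the regularized process $Y^2$, whose coefficients are bounded and Lipschitz. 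The remaining arguments are routine applications of BDG and Gronwall.
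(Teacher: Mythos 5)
Your overall architecture (Zvonkin transformation, transfer to the transformed processes, Gronwall) is the right one, and several ingredients are sound: the resolvent form of the Kolmogorov system with large $\lambda$ forcing $\Vert\nabla u\Vert_\infty\leq\tfrac12$ is a legitimate substitute for the paper's small-time device, the computation of $dY^1$ and $dY^2$ via the generalized It\^o formula (including the forcing term $[(I+\nabla u)(b^2-b^1)](t,X^2_t)\,\dif t$) is correct, and the squared Krylov bound $\mE[(\int_0^T|b^1-b^2|(r,X^2_r)\,\dif r)^2]\leq C\Vert b^1-b^2\Vert_{\mL_p^q(T)}^2$ is obtainable as you say. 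But there is a genuine gap at the central step: you assert that $\tilde\sigma(t,y)=[(I+\nabla u)\sigma](t,\Phi^{-1}(t,y))$ is Lipschitz in $y$ uniformly in $t$. Under {\bf($H_\sigma$)} the coefficient $\sigma$ is \emph{not} Lipschitz --- the hypothesis is only $|\nabla\sigma|\in\mL_p^{q,loc}$ (Sobolev regularity) plus ellipticity and uniform continuity of $a$ --- and your own PDE input gives $\nabla u$ merely bounded and H\"older (in fact $\nabla^2 u\in\mL_p^q(T)$, again Sobolev). So $\tilde\sigma$ inherits only Sobolev regularity, which is exactly what Lemma \ref{hom}(3) records by saying $\tilde\sigma^b$ satisfies {\bf($H_{\tilde\sigma^b}$)}. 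Consequently the ``routine BDG and Gronwall'' estimate of the martingale difference $\int_0^t[\tilde\sigma(s,Y^1_s)-\tilde\sigma(s,Y^2_s)]\,\dif W_s$ collapses: there is no pointwise bound $\Vert\tilde\sigma(t,y)-\tilde\sigma(t,y')\Vert\leq L|y-y'|$ to close the Gronwall loop, and with only H\"older continuity the loop does not close at the stated $L^2$ rate either.

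This failing step is precisely where the genuinely hard analytic work of the theorem lives, and it is why the paper's proof is structured as it is: after removing the drifts via \emph{two} Zvonkin transformations (one built from $b^1$, one from $b^2$), the paper reduces \eqref{sta} to a stability estimate \eqref{staY} for driftless SDEs with two different \emph{Sobolev} diffusion coefficients, imported from Zhang's work (\cite[Theorem 5.3]{Zhang2017} together with \cite[Lemma 3.2]{Zhang2011}); the $b^1-b^2$ dependence then enters through $\Vert\tilde\sigma^{b^1}-\tilde\sigma^{b^2}\Vert_{\mL_p^q(T)}\leq N\Vert b^1-b^2\Vert_{\mL_p^q(T)}$ and through the comparison of $\Psi^1,\Psi^2$, rather than through a forcing term. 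The proof of that Sobolev-diffusion stability result is not a BDG/Gronwall argument: it rests on the pointwise maximal-function inequality $|f(x)-f(y)|\leq C|x-y|(\cM|\nabla f|(x)+\cM|\nabla f|(y))$, Krylov--Khasminskii exponential moment bounds for $\cM|\nabla\tilde\sigma|^2$ along the solutions, and an exponential-martingale factorization of $|Y^1_t-Y^2_t|^2$. Your single-transform scheme is salvageable --- note that $\tilde\sigma$ satisfies {\bf($H_{\tilde\sigma}$)} and invoke the same Sobolev-stability machinery for the common-diffusion part of your difference equation, keeping your squared Krylov bound for the forcing term --- but as written, the proof does not go through, and the missing ingredient is the substantive one.
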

\begin{proof}
We follow the argument from \cite{Zhang2017}. We show \eqref{sta} via the following two steps.\\

\emph{Step 1. SDEs with different diffusion terms and no drift.}  \\

 Let $\sigma^1$ and $\sigma^2$ satisfy {\bf($H_{\sigma^1}$)}  and   {\bf($H_{\sigma^2}$)} respectively. Let $(Y_t^1(y_1))_{t\geq0}$ and $(Y_t^2(y_2))_{t\geq0}$ be the unique strong solutions to
    \begin{align}\label{Y1}
    dY_t^1 = \sigma^1(t,Y_t^1)\, \dif W_t, \quad Y_0^1=y_1\in\mathbb{R}^d
    \end{align}
    and
    \begin{align}\label{Y2}
    dY_t^2=\sigma^2(t,Y_t^2)\, \dif W_t, \quad Y_0^2=y_2\in\mathbb{R}^d.
    \end{align} 
    By \cite[Theorem 5.3]{Zhang2017}, \cite[Lemma 3.2]{Zhang2011} and the triangle inequality, we get  that there exist  $C_1=C_1(K, d,p,q,\Vert \nabla\sigma^1\Vert_{\mL_p^q(T)},\Vert \nabla\sigma^2\Vert_{\mL_p^q(T)},T)>0$ and $C_2=C_2(K,d,p,q,\Vert \nabla\sigma^2\Vert_{\mL_p^q(T)},T)>0$ such that 
    \begin{align}\label{staY}
    \mE  &\( \sup_{t\in[0,T]}|Y_t^1(y_1)-Y_t^2(y_2)|^2 \) \leq C_1\Vert \sigma^1-\sigma^2\Vert_{\mL_p^q(T)}^2+C_2|y_1-y_2|^2.
    \end{align}
    \\

   \emph{Step 2. SDEs with different drifts and common noise.} 
    Fix $T>0$. For  $k=1,2$ and $l=1,\cdots,d$, {following  from Theorem \ref{pde}}, let $u^{k,(l)}\in \sH_{2,p}^q(T)$ be the unique solution to the equation
$$
\partial_tu^{k,(l)}+\frac{1}{2}\sum_{i,j=1}^da_{ij}\partial_{ij}^2u^{k,(l)}+b^{k,(l)}\cdot\nabla u^{k,(l)}+b^{k,(l)}=0, \quad t\in[0,T], u^{k,(l)}(T,x)=0,l=1,\cdots,d,
$$
where $(a_{ij})_{1\leq i,j\leq d}=\sigma\sigma^*$.
Let $U_{b^k}:=(u^{k,(l)})_{1\leq l\leq d}$, and define 
\begin{align*}
    \Phi^k(t,x):=x+U_{b^k}(t,x) \text{ for } (t,x)\in[0,T]\times\mathbb{R}^{d}.
\end{align*}
 Lemma \ref{hom} shows that there exists some $T>0$ such that  $\Phi^k(t,.)$ is a diffeomorphism for each  $t \in [0,T]$ and that its derivative is jointly continuous in $(t,x) \in [0,T]\times \mR^d$.  Let
 $$Y^1 := \Phi^1(t,X_t^1), \quad \Psi^1(t,y)=({\Phi^1})^{-1}(t,y).$$
 By \cite[Lemma 4.3]{Zhang2011}, we see that $(X_t^1(x_1))_{t\geq0}$ solves the SDE \eqref{sde1} if and only if $(Y_t^1(y_1))_{t\geq0}$ solves \eqref{Y1} with $$\sigma^1(t,y):=[\nabla\Phi^{1}\cdot\sigma]\circ({\Psi^{1}}(t,y)),\quad x_1=\Psi ^1(0,y_1),$$
 and similarly for $(X_t^2(x_2))_{t\geq0}$ and $(Y_t^2(y_2))_{t\geq0}$, i.e., for $i=1,2$, $(X_t^i(x_i))_{t\geq0}:=(\Psi^i(t,Y_t^i(y_i)))_{t\geq0}$ is the solution to \eqref{sde1} and \eqref{sde2} respectively. Notice that by {\eqref{c1hommo} and} Lemma \ref{hom} (2), there exists $N$  such that 
 \begin{align*}|y_1-y_2|&=|\Phi^1(0,x_1)-\Phi^2(0,x_2)|\\&\leq  {\frac{3}{2}}|x_1-x_2|+N\Vert b^1-{b^2}\Vert_{\mL_p^q([s_0,t_0])}.
 \end{align*}
 By \eqref{c1hommo} and Lemma \ref{hom} (1), (2), we have
\begin{align}\label{Psi12}
\sup_{y\in\mathbb{R}^d}|\Psi^{1}(t,y)-\Psi^{2}(t,y)|&=\sup_{y\in\mathbb{R}^d}|y-\Psi^{2}(t,\cdot)\circ\Phi^{1}(t,y)|
\nonumber\\&\leq 2\sup_{y\in\mathbb{R}^d}|\Phi^2(t,y)-\Phi^{1}(t,y)|
\nonumber\\&\leq C_3\Vert b^1-b^2\Vert_{\mL_p^q(T)},
\end{align}
where $C_3=C(K, d,p,q,\Vert b^1\Vert_{\mL_p^q([s_0,t_0])},\Vert {b^2}\Vert_{\mL_p^q([s_0,t_0])})$.
 Then, combining \eqref{Psi12} with \eqref{staY} and Lemma \ref{hom} (1), we get
\begin{align}\label{Zvonkxy}
\mE&\sup_{t\in[0,T]}|X_t^1(x_1)-X_t^2(x_2)|^2
\nonumber\\&=\mE\sup_{t\in[0,T]}|\Psi^{1}(t,Y_t^1(y_1))-\Psi^{2}(t,Y_t^2(y_2))|^2\nonumber\\&\leq 2\mE\sup_{t\in[0,T]}|\Psi^{1}(t,Y_t^1(y_1))-\Psi^{1}(t,Y_t^2(y_2))|^2+2\mE\sup_{t\in[0,T]}|\Psi^{1}(t,Y_t^2(y_2))-\Psi^{2}(t,Y_t^2(y_2))|^2
\nonumber\\&\leq 2\Vert \nabla \Psi^1\Vert_{\mL^\infty(T)}^2 \mE\sup_{t\in[0,T]}|Y_t^1(y_1)-Y_t^2(y_2)|^2+C_4\Vert b^1-b^2\Vert_{\mL_p^q(T)}^2
\nonumber\\&\leq C_5\Vert b^1-b^2\Vert_{\mL_p^q(T)}^2+C_6|y_1-y_2|^2
\nonumber\\&\leq C_5\Vert b^1-b^2\Vert_{\mL_p^q(T)}^2+C_7|x_1-x_2|^2,
\end{align}
where all constants appearing are positive. 
 Therefore, we obtain \eqref{sta} for $T=T_0$ sufficiently small. For general $T$, we iterate the argument (as in  \cite[Theorem 1.1]{Zhang2017}) and show that \eqref{sta} holds on the interval $[\frac{T_0}{2}, \frac{3T_0}{2}]$ etc. By the pathwise uniqueness property, we can patch up the solution and conclude the proof.
\end{proof}
\begin{remark}
 In \cite[Theorem 1.1 E]{Zhang2017} and \cite[Theorem 3.10]{XZ2}, similar stability results were obtained under slightly stronger assumptions.  In \cite[Theorem 6.1]{Zhang2017}, the authors assumed in addition  that  $\nabla\sigma, b^i\in \mathbb{L}^q_p(T)$ with $p=q$ and $p>d+2$, $i=1,2$, to show well-posedness of the corresponding Kolmogorov equation and, consequently, the stability result.  
 In \cite[Theorem 3.10]{XZ2}, the condition  $\nabla b^1\in \mathbb{L}^{q'}_{p'}(T)$ for some $p',q'\in(1,\infty)$ was assumed to obtain the stability result.
\end{remark}

\subsection{Wong-Zakai approximations}
Next, we prove a quantitative version of the classical Wong-Zakai theorem. We follow the exposition in \cite{Ikeda}. 

Let $(\mathbb{W},\mP,\mathcal{F},(\mathcal{F}_t)_{t\geq0})$ be a Wiener space, i.e. $\mathbb{W}:=\{\omega\in\mathcal{C}([0,\infty),\mathbb{R}^d):\omega(0)=0\}$, $\mP$ is the Wiener measure defined on the Borel $\sigma$-algebra $\mathcal{F}$ of $\mathbb{W}$, $W_t(\omega):=\omega(t),\,t \geq 0$ is a Wiener process and $(\mathcal{F}_t)_{t\geq0}$ is the filtration generated by  $(W_t)_{t \geq 0}$. We shall consider the following class of approximations:
\begin{definition}\cite[VI. Definition 7.1]{Ikeda}\label{apw}
By an approximation of the Wiener process $W_t(\omega):=\omega(t),\,t \geq 0$, we mean a family $\{W^n\}_{n\geq0}$ of $d$-dimensional continuous processes defined on the Wiener space $(\mathbb{W},\mP,\mathcal{F},(\mathcal{F}_t)_{t\geq0})$ such that
\begin{itemize}
    \item[(1)] for every $\omega\in\mathbb{W}, t\mapsto W^n_t(\omega)$ is continuous and piecewise continuously differentiable,
    \item[(2)] $W^n_0(\omega)$ is $\mathcal{F}_\frac{1}{n}$-measurable, and $\mE W^{n}_0=0$,
    \item[(3)] $W^n_{t+\frac{k}{n}}(\omega)=W^n_{t}(\theta_{\frac{k}{n}}\omega)+\omega(\frac{k}{n})$, for every $k=1,2,\cdots, t\geq0$ and $\omega$, where $\theta_t,t\geq0$ is the shift operator defined by $(\theta_t\omega)(s)=\omega(t+s)-\omega(t)$ for $\omega\in\mathbb{W}$,
    \item[(4)] there exists a positive constant $C$ such that $$\mE[|W^n_{\frac{1}{n}}|^6]\leq C\frac{1}{n^3},\quad \mE\(\int_0^{\frac{1}{n}}|\frac{\dif }{\dif s}W^n_s|\dif s\)^6\leq C\frac{1}{n^3}.$$
\end{itemize}
\end{definition}

From \cite[VI. Section 7]{Ikeda} we know that for such a sequence $W^n$ we have 
$$\lim_{n\rightarrow\infty}\mE\sup_{0\leq t\leq T}|W_t-W_t^n|^2=0$$
for every $T>0$.
We further introduce the following notation for $t>0$: 
\begin{align}\label{sij}
S_{ij}(t):= \frac{\(\int_0^t W_s^i\circ \dif W^j_s-W_s^j\circ \dif W^i_s\)}{{{2}}t},\quad i,j=1,\cdots,d,\quad,t>0.
\end{align}
Further,
\begin{align}\label{sijn}
s_{ij}^n(t):=s_{ij}(t,n):=\mE S_{ij}(t,n)=\frac{\mE\(\int_0^tW^{n,i}_s\frac{\dif }{\dif s}W^{n,j}_s-W^{n,j}_s\frac{\dif }{\dif s}W^{n,i}_s\dif s\)}{2t},\quad i,j=1,\cdots,d,
\end{align}
and
\begin{align}\label{cijn}
c_{ij}^n(t):=c_{ij}(t,n):=\frac{\mE\Big[\int_0^t\frac{\dif }{\dif s}W^{n,i}_s\(W^{n,j}_t-W^{n,j}_s\)\dif s\Big]}{t}
,\quad i,j=1,\cdots,d.
\end{align}
Further, $S_{ij}(0):=0$, $s_{ij}^n(0):=s_{ij}(0):=0$ and $c_{ij}^n(0):=c_{ij}(0):=0$.

Observe that $(s_{ij}(t,n)):=(s_{ij}(t,n))_{1\leq i,j\leq d}$ is a skew-symmetric $d\times d$-matrix for each $t$ and $n$, i.e. $s_{ij}(t,n)=-s_{ji}(t,n)$.\\

 The process $S_{ij}(t)$ is known as the \emph{L\'evy area} in the literature and plays a fundamental role in rough paths theory. As explained in \cite{INY}, for such a general approximation $\{W^n_t\}$ of $W_t$, $t\geq0$, there are cases in which $(S_{ij}(t,n))$ converges to $(S_{ij}(t))$ (e.g.\cite{Levy}) and others for which $(S_{ij}(t,n))$ does not converge to $S_{ij}(t)$ as $n\rightarrow\infty$ but $S_{ij}(t)+$ 'another correction term' does \cite{Mcshane}. Here we consider a class of approximations of the Wiener process including both cases by assuming the following condition:
 \begin{assumption}\label{asssc}
 There exists a skew-symmetric $d\times d$-matrix $(s_{ij})_{1\leq i,j\leq d}$ and a rate function $f_n$ such that
 \begin{align*}
 \big|s_{ij}^n(\frac{1}{n})-s_{ij}\big|\leq f_n,\quad \lim_{n\rightarrow\infty}f_n=0,\quad i,j=1,\ldots,d.
 \end{align*}
 \end{assumption}
 The following proposition follows from the proof of \cite[VI. Lemma 7.1]{Ikeda} in a straightforward manner.
 \begin{proposition}Define
 \begin{align}\label{cij}
c_{ij}:=s_{ij}+\frac{1}{2}\delta_{ij},\quad i,j=1,\ldots,d,
\end{align}
and let $Z:(0,1]\to \mathbb{N}$ be a function such that $\lim_{\delta\rightarrow0}Z(\delta)=\infty$.
  Under Assumption \ref{asssc}, there exists a positive constant $C$, independent of $n$, such that
 \begin{align}\label{cijncij}
 \Big|c_{ij}\Big(\frac 1n Z\big(\frac{1}{n}\big),n\Big)-c_{ij}\Big|&\leq  \Big|s_{ij}^n(\frac{1}{n})-s_{ij}\Big|+ C\Big(Z(\frac{1}{n})^{-1}+Z(\frac{1}{n})^{-1/2}\Big)\nonumber\\&\leq f_n+2CZ(\frac{1}{n})^{-1/2}.
 \end{align}
 In particular, $$\lim_{n\rightarrow\infty}c_{ij}\Big(\frac 1n Z\big(\frac{1}{n}\big),{n}\Big)=c_{ij}.$$
 \end{proposition}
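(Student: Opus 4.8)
The plan is to adapt the proof of \cite[VI. Lemma 7.1]{Ikeda}, keeping explicit track of the dependence on $Z:=Z(1/n)$. Throughout I write $t_k:=k/n$, so the target time is $t_Z=Z/n$, and I let $\Delta_k:=W^n_{t_k}-W^n_{t_{k-1}}$ denote the increment of the approximation over the $k$-th block $[t_{k-1},t_k]$. The first step is to rewrite \eqref{cijn}: integrating by parts and using the pathwise product rule one checks that, for every $t>0$,
\[
t\,c_{ij}(t,n)=\mE\big[\mathbb{A}_{ij}(0,t)\big],\qquad \mathbb{A}_{ij}(s,u):=\int_s^u\big(W^{n,i}_r-W^{n,i}_s\big)\,\tfrac{\dif}{\dif r}W^{n,j}_r\,\dif r .
\]
Splitting $\mathbb{A}_{ij}(0,t)$ into its symmetric and antisymmetric parts in $(i,j)$, the antisymmetric part gives $s_{ij}(t,n)$ from \eqref{sijn} up to a $W^n_0$-boundary correction, while by the product rule the symmetric part equals $\tfrac12(W^{n,i}_t-W^{n,i}_0)(W^{n,j}_t-W^{n,j}_0)$; this is the source of the diagonal term $\tfrac12\delta_{ij}$ in \eqref{cij}.

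Next I would decompose $\mathbb{A}_{ij}(0,t_Z)$ over the $Z$ blocks. Chen's relation (additivity of the iterated integral) gives
\[
\mathbb{A}_{ij}(0,t_Z)=\sum_{k=1}^Z\mathbb{A}_{ij}(t_{k-1},t_k)+\sum_{k=2}^Z\big(W^{n,i}_{t_{k-1}}-W^{n,i}_0\big)\,\Delta_k^{j}.
\]
Property (3) of Definition \ref{apw} expresses each block increment through the shift $\theta_{t_{k-1}}$, which preserves the Wiener measure; hence $\mE[\mathbb{A}_{ij}(t_{k-1},t_k)]=\mE[\mathbb{A}_{ij}(0,t_1)]=\tfrac1n c_{ij}(1/n,n)$ for every $k$, and the sequence $(\Delta_k)_{k\geq1}$ is stationary. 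Moreover, since $W^n_0$ is $\mathcal{F}_{1/n}$-measurable with $\mE W^n_0=0$ (property (2)), each $\Delta_k$ depends only on the increments of $\omega$ over $[t_{k-1},t_{k+1}]$ and is centred, so $\Delta_k$ and $\Delta_l$ are independent once $|k-l|\geq2$. Consequently $\mE[(W^{n,i}_{t_{k-1}}-W^{n,i}_0)\Delta_k^j]=\mE[\Delta_{k-1}^i\Delta_k^j]$, and after taking expectations and dividing by $t_Z$ one obtains the exact relation
\[
c_{ij}(Z/n,n)=c_{ij}(1/n,n)+\tfrac{Z-1}{Z}\,n\,\mE[\Delta_1^i\Delta_2^j].
\]

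Finally I would combine this with the total-covariance identity $\sum_{k,l=1}^Z\mE[\Delta_k^i\Delta_l^j]=\mE[(W^{n,i}_{t_Z}-W^{n,i}_0)(W^{n,j}_{t_Z}-W^{n,j}_0)]$, whose right-hand side, again by property (3), equals $\tfrac{Z}{n}\delta_{ij}$ up to boundary contributions involving $W^n_0$. Using the one-block splitting of the first display to replace $c_{ij}(1/n,n)$ by $s_{ij}^n(1/n)+\tfrac12\delta_{ij}$ plus $W^n_0$-boundary terms, and inserting the total-covariance identity to produce the factor $\tfrac12\delta_{ij}$, every surviving correction becomes an expectation of products of the small increments $\Delta_k$ and of $W^n_0$. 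These are estimated by the sixth-moment bounds of property (4) together with Hölder's inequality, which bounds them by $C(Z^{-1}+Z^{-1/2})$; this yields the first inequality in \eqref{cijncij}. The second inequality and the limit then follow from Assumption \ref{asssc} (so that $|s_{ij}^n(1/n)-s_{ij}|\leq f_n$) and from $Z^{-1}\leq Z^{-1/2}$. The main obstacle is the bookkeeping of the $W^n_0$-boundary terms: one must verify the $1$-dependence carefully so that the cross-term sum collapses to a single covariance, and then match the leftover antisymmetric boundary contribution with $s_{ij}^n(1/n)$ while bounding the remaining symmetric part at the claimed rate via property (4).
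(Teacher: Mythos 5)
Your proposal is correct and takes essentially the same route as the paper, whose proof is exactly an adaptation of \cite[VI. Lemma 7.1]{Ikeda}: the shift property (3) of Definition \ref{apw} for stationarity, property (2) for centering and one-dependence of the block increments $\Delta_k$, and the moment bounds (4) for the boundary terms. Your key identities all check out — $t\,c_{ij}(t,n)=\mE\big[\int_0^t(W^{n,i}_r-W^{n,i}_0)\tfrac{\dif}{\dif r}W^{n,j}_r\,\dif r\big]$, the Chen-type block decomposition, and the exact relation $c_{ij}(Z/n,n)=c_{ij}(1/n,n)+\tfrac{Z-1}{Z}\,n\,\mE[\Delta_1^i\Delta_2^j]$ — and carrying out the indicated bookkeeping (using $\mE[W^{n,i}_0\Delta_1^j]=\mE[\Delta_1^i\Delta_2^j]$ so the per-block boundary terms telescope against the cross-term sum) in fact yields an error of order $Z(1/n)^{-1}$, which is within the claimed bound $C\big(Z(1/n)^{-1}+Z(1/n)^{-1/2}\big)$.
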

  We provide  three classical examples for such approximations. For this purpose, we introduce the following notation. Denote
$$\mathcal{D}:=\{f: [0,1]\rightarrow \mathbb{R} \text { continuously differentiable},\quad f(0)=0 \text{ and } f(1)=1.\}$$
\begin{example}\label{examapp}
\begin{itemize}
\item[(1)] For $f\in\mathcal{D}$, let
       $$W_t^n:=W_{\frac{k}{n}}+f(t-\frac{k}{n})(W_{\frac{k+1}{n}}-W_{\frac{k}{n}}), \quad t\in[\frac{k}{n},\frac{k+1}{n}),\quad k=1,2,\cdots.$$
       Then according to \cite[VI. Example 7.1]{Ikeda} such $(W^n_t)_{n\geq1}$ satisfies Definition \ref{apw}, hence it is an approximation of $W_t$, $t\geq0$. In this case, we easily observe that
       $$s_{ij}^n(\frac{1}{n})=0,\quad s_{ij}=0,\quad c_{ij}=\frac{1}{2}\delta_{ij},\quad i,j=1,\cdots,d.$$
       It implies that Assumption \ref{asssc} is fulfilled by taking $f_n=0$ for $n\in\mathbb{N}_+$.

       In particular if we take $f(t)=t$, then $(W^n)_{n\geq1}$ is the familiar piecewise linear approximation.
       \item[(2)] Mollification: Let $\rho$ be a non-negative $\mathcal{C}^\infty$-function with support contained in $[0,1]$ and $\int_0^1\rho(s)\dif s=1$. Set $$\rho_n(s)=n\rho(ns),\quad n\geq0$$
           and
           $$W_s^n=W_\cdot\ast\rho_n(s)=\int_0^\infty W_r\rho_n(s-r) \, \dif r.$$
           We can verify that $(W^n)_{n\geq 1}$ is an approximation of a Wiener process (see, e.g. \cite[VI. Example 7.3]{Ikeda}). Also in this case
           $$s_{ij}^n(\frac{1}{n})=0,\quad s_{ij}=0,\quad i,j=1,\cdots,d.$$
            Hence Assumption \ref{asssc} is satisfied by taking $f_n=0$ and $c_{ij}=\frac{1}{2}\delta_{ij}$.
           \item[(3)] (E.J. McShane\cite{Mcshane}) Let $d=2$, $f^i\in\mathcal{D}$, $i=1,2$. For $i=1,2$, we define for
           $t\in[\frac{k}{n},\frac{k+1}{n}),$ where $n\in \mathbb{N}_+$, $k=0,1,\cdots,n-1,$
           \begin{equation*}W^{n,i}_t=
     \left\{
     \begin{aligned}
     & W^i_{\frac{k}{n}}+f^i(n(t-\frac{k}{n}))(W_{\frac{k+1}{n}}^i-W_{\frac{k}{n}}^i), \text{ if } (W_{\frac{k+1}{n}}^1-W_{\frac{k}{n}}^1)(W_{\frac{k+1}{n}}^2-W_{\frac{k}{n}}^2)\geq0,\\
    &W^i_{\frac{k}{n}}+f^{3-i}(n(t-\frac{k}{n}))(W_{\frac{k+1}{n}}^i-W_{\frac{k}{n}}^i), \text{ otherwise. } \\
    \end{aligned}
    \right.
    \end{equation*}
          $(W^n)_{n\geq 1}$ fulfills all conditions in Definition \ref{apw} (e.g. \cite[VI. Example 7.2]{Ikeda}). In this case, we can verify that 
          $$s_{11}=s_{22}=0,\quad s_{12}=\frac{1}{\pi}, \quad s_{21}=-\frac{1}{\pi}.$$
\end{itemize}
\end{example}


In the rest of this section, we only consider time homogeneous coefficients, 
i.e.~ $b:\mathbb{R}^d\rightarrow\mathbb{R}^d$ and $\sigma:\mathbb{R}^d\rightarrow\mathbb{R}^{d\times d}$. 
We believe that the extension to inhomogeneous coefficients is straightforward, but we restrict ourselves to this case for the sake of simplicity.

To prove our main result, we need a quantitative version of the classical Wong-Zakai theorem, i.e. an upper bound for the convergence rate. It turns out that a careful inspection of the proof of \cite[VI. Theorem 7.2]{Ikeda} even leads to the known optimal convergence rate. We state this result in the next theorem.

\begin{theorem}\label{smoothdrift}
Suppose that for $1\leq i,j\leq d$, $\sigma_{ij}\in\mathcal{C}_b^2(\mathbb{R}^d)$, $b^{(i)}\in \mathcal{C}_b^1(\mathbb{R}^d)$. Let $(W^n_t)_{n\geq 1}$ be a sequence of approximations for $W_t,t\geq0$ satisfying all conditions in Definition \ref{apw}. Assume  that $(W_t^n)_{n\geq 1},t\geq0,$ satisfies  Assumption \ref{asssc}. Consider the following equations
\begin{align}\label{eqn}
X_t=x_0+\int_0^tb(X_s) \, \dif s + \int_0^t\sigma(X_s)\, \dif W_s + \Big(\sum_{i,j=1}^d\sum_{l=1}^dc_{ij}\int_0^t\sigma_{il}\cdot\nabla_l \sigma_{jk}(X_s)\, \dif s\Big)_{k=1,\cdots,d},  
\end{align}
and
\begin{align*}
X_t^{n} = x_0 + \int_0^tb(X_s^{n})\, \dif s + \int_0^t\sigma(X_s^{n})\, \dif W_{s}^n,\quad t \geq 0.
\end{align*}
Then, for each $T>0$ and $\delta\in(0,1)$,
\begin{align}\label{errorest}
\mE\(\sup_{t\in[0,T]}|X_t-X_t^{n}|^2\)\leq C_1e^{C_2\Vert b\Vert_{\mathcal{C}_b^1(\mathbb{R}^d)}^2}(f^2_n+(1+\sup_{x\in\mathbb{R}^d}|b(x)|^2)\frac{1}{n^{1-\delta}}),
\end{align}
where the constants $C_1,C_2$ depend on $\delta$, $\sigma$ and $T$ but not on  $b$ and $n$, and $f_n$ is from Assumption \ref{asssc}.
\end{theorem}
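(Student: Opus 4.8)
The plan is to follow the classical argument of \cite[VI.~Theorem 7.2]{Ikeda}, tracking constants sharply enough to reach the optimal rate. Since $X$ and $X^n$ carry the same smooth bounded coefficients and differ only in the driving path, I would partition $[0,T]$ into blocks $[t_m,t_{m+1})$ of common length $h_n:=\tfrac1n Z(1/n)$, where $Z:(0,1]\to\mathbb N$ is the slowly growing function from the preceding Proposition and each block consists of $Z(1/n)$ cells of the underlying mesh $\{k/n\}$ on which $W^n$ is piecewise $C^1$. Because $Z(1/n)\in\mathbb N$, the structural identity (3) of Definition \ref{apw} applies at the block scale, $W^n_{t+h_n}(\omega)=W^n_t(\theta_{h_n}\omega)+\omega(h_n)$, so that, modulo the deterministic increment $\omega(h_n)$, the approximating path on each block is a shifted copy of its restriction to $[0,h_n]$. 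This stationarity in $m$ lets the per-block contribution be computed once and for all and then summed.

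On a single block I would Taylor expand $b$ and $\sigma$ about the left endpoint $X^n_{t_m}$ in the integrated form of the random ODE. The zeroth and first order terms give $b(X^n_{t_m})h_n$ and $\sigma(X^n_{t_m})(W^n_{t_{m+1}}-W^n_{t_m})$, the latter matching the It\^o increment of the SDE up to a discrepancy controlled by $\mE\sup_{t\leq T}|W_t-W^n_t|^2\to0$. The decisive contribution is the second order term, a sum over $i,j,l$ of $\sigma_{il}\nabla_l\sigma_{jk}(X^n_{t_m})$ against the block functional
\begin{align*}
\int_{t_m}^{t_{m+1}}\big(W^{n,i}_s-W^{n,i}_{t_m}\big)\,\tfrac{\dif}{\dif s}W^{n,j}_s\,\dif s .
\end{align*}
By \eqref{cijn}, an integration by parts, and the shift property, its conditional mean equals $h_n\,c_{ij}(h_n,n)$, and the preceding Proposition gives $|c_{ij}(h_n,n)-c_{ij}|\leq f_n+2CZ(1/n)^{-1/2}$ with $c_{ij}=s_{ij}+\tfrac12\delta_{ij}$ as in \eqref{cij}. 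Summing over the $\sim T/h_n$ blocks reconstructs exactly the correction drift $\sum_{i,j,l}c_{ij}\int_0^t\sigma_{il}\nabla_l\sigma_{jk}(X_s)\,\dif s$ of \eqref{eqn}, with aggregate matching error of order $f_n+Z(1/n)^{-1/2}$.

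With $e^n_t:=X_t-X^n_t$, I would subtract the two equations and run an $L^2$ energy estimate for $\mE\sup_{s\leq t}|e^n_s|^2$. The drift difference obeys $|b(X_s)-b(X^n_s)|\leq\|b\|_{\mathcal{C}_b^1}|e^n_s|$, and absorbing the resulting cross term by Young's inequality produces a Gronwall coefficient of size $1+\|b\|_{\mathcal{C}_b^1}^2$, hence the factor $e^{C_2\|b\|_{\mathcal{C}_b^1}^2}$; the diffusion difference is handled by the Burkholder--Davis--Gundy inequality and the $\mathcal{C}_b^2$ bounds on $\sigma$. The inhomogeneous sources are then, first, the correction matching error of squared size $f_n^2$ (together with a block-scale error $\sim Z(1/n)^{-1}$ absorbed into the target rate by the choice of $Z$), and second, the accumulated Taylor remainders and the fluctuations of the block functionals about their conditional means. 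To bound the latter at the optimal rate I would invoke the sixth moment estimates of Definition \ref{apw}(4), $\mE|W^n_{1/n}|^6\leq Cn^{-3}$ and $\mE\big(\int_0^{1/n}|\tfrac{\dif}{\dif s}W^n_s|\,\dif s\big)^6\leq Cn^{-3}$: via Cauchy--Schwarz and a square-function estimate for the centered contributions these yield an aggregate of order $n^{-(1-\delta)}$ for every $\delta\in(0,1)$, the prefactor $1+\sup_x|b(x)|^2$ recording the drift's contribution to the size of the increments. Balancing the block size against these remainders and combining with Gronwall gives \eqref{errorest}.

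The genuine obstacle I anticipate is the sharp control of the Taylor remainders and functional fluctuations: a crude block-by-block bound, summed over the $\sim T/h_n$ blocks, loses a power and fails to reach $n^{-(1-\delta)}$, so one must exploit the cancellation between blocks. The centered increments and the centered block functionals form a conditional martingale difference array, and it is precisely the sixth moment control, rather than mere second moments, that lets the associated discrete square function be summed without loss; simultaneously the block size $Z(1/n)$ must be tuned to balance this remainder against the correction matching error $Z(1/n)^{-1/2}$. The second delicate point is to verify that the block conditional means agree with $c_{ij}$ uniformly in the block index up to $O(f_n+Z(1/n)^{-1/2})$, and not merely in an averaged sense. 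Once these two points are secured, the remaining estimates are routine consequences of the boundedness and smoothness of $b$ and $\sigma$ together with Gronwall and the Burkholder--Davis--Gundy inequality.
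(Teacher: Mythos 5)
Your proposal follows essentially the same route as the paper's proof: the paper likewise tracks the convergence rate through Ikeda--Watanabe's argument for VI.~Theorem~7.2, working at block scale $\frac{1}{n}Z_\delta(\frac{1}{n})$, invoking the bound \eqref{cijncij} on $\big|c_{ij}\big(\frac{1}{n}Z_\delta(\frac{1}{n}),n\big)-c_{ij}\big|$, citing the key inequality [VI.~(7.74)] of Ikeda--Watanabe for the pre-Gronwall estimate with coefficient $\Vert b\Vert_{\mathcal{C}_b^1(\mathbb{R}^d)}^2$ and source term $(1+\sup_x|b(x)|^2)Z_\delta(n^{-1})^4n^{-1}+c_n^2$, and concluding by Gronwall, with the one concrete ingredient you leave to a final balancing being made explicit there as $Z_\delta(s):=\lfloor s^{-\delta/4}\rfloor$, which gives $Z_\delta(1/n)^4n^{-1}\leq n^{-(1-\delta)}$. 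The block decomposition via the shift property, the conditional-mean identification of the correction term, and the sixth-moment control of the centered fluctuations that you spell out are precisely what the paper's citation compresses, so your attempt is correct at the same level of rigor as the paper's own proof.
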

\begin{proof} We closely follow the argument  in the proof of \cite[VI. Theorem 7.2]{Ikeda} but we keep track of  the convergence rate of the approximating sequence $\{X_t^{n}\}_{n\geq 1}$  to $X_t$ in the mean square sense on $t\in[0,T]$.

 For $\delta\in(0,1)$, let {{$Z_\delta(s):=\lfloor s^{-\frac{\delta}{4}}\rfloor$, $s \in (0,1)$.}} Notice that such functions $Z_\delta$ satisfy
 \begin{align}\label{Zdelta}
     Z_\delta:(0,1)\rightarrow\mathbb{N}, \quad \lim_{n\rightarrow\infty}Z_\delta\Big(\frac{1}{n}\Big)^4n^{-1}\leq \lim_{n\rightarrow\infty} \frac{1}{n^{1-\delta}}=0,\quad \lim_{s\rightarrow 0} Z_\delta(s)=\infty.
 \end{align}
Therefore for such $Z_\delta$, by \eqref{cijncij}, we have
$$c_n:=\Big|c_{ij}\Big(\frac{1}{n}Z_\delta\Big(\frac{1}{n}\Big),\frac{1}{n}\Big)-c_{ij}\Big|\leq f_n+C Z_\delta\Big(\frac{1}{n}\Big)^{-1/2}.$$
Then, as in \cite[VI. (7.74)]{Ikeda}, there exist constants $C'(\sigma),C''(\sigma)>0$ independent of $b$ such that
\begin{align*}
\mE&\(\sup_{t\in[0,T]}|X_t-X_t^{n}|^2\)
\\&\leq C'\Vert b\Vert_{\mathcal{C}_b^1(\mathbb{R}^d)}^2\int_0^T \mE\(|X_t-X_t^{n}|^2\)\dif t+C''[(1+\sup_{x\in\mathbb{R}^d}|b(x)|^2)Z_\delta(n^{-1})^4n^{-1}+c_n^2].
\end{align*}
Finally, Gronwall's inequality together with \eqref{Zdelta} show that \eqref{errorest} holds.
\end{proof}

\begin{remark}
    If $f_n = 0$ for every $n$, Theorem \ref{smoothdrift} shows that the Wong-Zakai approximation converges with rate arbitrarily close to $\frac{1}{2}$ uniformly in $L^2(\mathbb{P})$. This is in line with the known optimal (strong) convergence rate for the Wong-Zakai approximation.
\end{remark}

As we already mentioned earlier, to formulate a Wong-Zakai theorem for singular SDEs, we cannot just simply replace the noise by a smooth approximation since this will render the equation ill-posed. To solve this problem, we will approximate the singular coefficient $b$ at the same time. We will now introduce a class of approximations for this $b$.

\begin{definition}[$\mathcal{A}(b,p,h)$: approximation of $|b|\in L^p(\mathbb{R}^d)$]\label{bph} Let $(b_n)_{n \geq 1}$ be a sequence of functions $b_n \colon \mathbb{R}^d \to \mathbb{R}^d$. For a function $h\in[0,\infty)\rightarrow[0,\infty)$, we write $(b_n)_{n\geq 1}\in \mathcal{A}(b,p,h)$ if
\begin{itemize}
\item[(i)] $b_n^{(i)}\in \mathcal{C}_b^1(\mathbb{R}^d)$, $i = 1,\ldots,d$, 
\item[(ii)]  $\Vert b-b_n\Vert_{L^{p}(\mathbb{R}^d)}\rightarrow0$ as $n\rightarrow\infty$,
\item[(iii)] $\Vert b_n\Vert_{\mathcal{C}_b^1(\mathbb{R}^d)}\leq h(n)\Vert b\Vert_{L^p(\mathbb{R}^d)}$.
\end{itemize}
\end{definition}
Next, we state our main theorem.

\begin{theorem}\label{Wzkthy}
Assume that $b \colon \mathbb{R}^d \to \mathbb{R}^d$ is measurable and that $|b| \in L^p(\mathbb{R}^d)$ for some $p \in (d,\infty)$ and $p\geq 2$. Let $\sigma$ satisfy condition $(H_\sigma)$ in Assumption \ref{ass} and for $1\leq i,j\leq d$, $\sigma_{ij}\in\mathcal{C}_b^2(\mathbb{R}^d)$.
Assume further that $(W^n_t)$ is sequence of approximations for $W_t,t\geq0$, satisfying all the conditions in Definition \ref{apw} and Assumption \ref{asssc}.  Let $(X_t)_{t\geq0}$ be the solution to 
\begin{align}\label{wzksds}
X_t=x_0+\int_0^tb(X_s) \, \dif s+\int_0^t\sigma(X_s) \, \dif W_s + \Big(\sum_{i,j=1}^d\sum_{l=1}^dc_{ij}\int_0^t\sigma_{il}\cdot\nabla_l \sigma_{jk}(X_s) \, \dif s\Big)_{k=1,\cdots,d}
\end{align}
with $c_{ij}$ defined as in Assumption \ref{asssc}. Let $(b_n)_{n\geq 1}\in \mathcal{A}(b,p,h)$ be a smooth approximation sequence of $b$ such that
 \begin{align}\label{hfn}
 \lim_{n\rightarrow\infty} e^{h(n)^2\Vert b\Vert_{L^p(\mathbb{R}^d)}^2}(f^2_n+(1+h(n)^2\Vert b\Vert_{L^p(\mathbb{R}^d)}^2)\frac{1}{n^{1-\delta}})=0,\quad \text{for some }\delta\in(0,1),
 \end{align}
 where $f_n$ is given in Assumption \ref{asssc}. Then for every $T>0$ we have \begin{align}\label{error}
\lim_{n\rightarrow\infty} \mE\Big[\sup_{0\leq t\leq T}|X_t-X_t^{n}|^2\Big]=0.
\end{align}
Here $(X_t^{n})_{t\geq0}$ is the solution to the equation
\begin{align*}
X_t^{n}=x_0+\int_0^tb_n(X_s^{n}) \, \dif s + \int_0^t\sigma(X_s^{n}) \, \dif W_{s}^n.
\end{align*}
\end{theorem}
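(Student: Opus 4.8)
The plan is to bridge the regularized equation for $X^n$ and the singular limit $X$ through a single auxiliary process carrying the smooth drift $b_n$ but driven by the genuine Brownian motion, and then to combine the quantitative smooth Wong--Zakai bound of Theorem~\ref{smoothdrift} with the stability estimate of Theorem~\ref{main1}. Write $\hat b=(\hat b^{(k)})_{k=1,\dots,d}$ for the field $\hat b^{(k)}(x):=\sum_{i,j=1}^d\sum_{l=1}^d c_{ij}\,\sigma_{il}(x)\,\nabla_l\sigma_{jk}(x)$, which is bounded and Lipschitz since $\sigma_{ij}\in\mathcal C_b^2(\mR^d)$. Let $(\tilde X_t^n)_{t\geq0}$ be the solution of \eqref{eqn} with $b$ replaced by $b_n$, i.e.
\[
\tilde X_t^n=x_0+\int_0^t b_n(\tilde X_s^n)\,\dif s+\int_0^t\sigma(\tilde X_s^n)\,\dif W_s+\int_0^t\hat b(\tilde X_s^n)\,\dif s ;
\]
this is well posed by classical theory since $b_n+\hat b$ is bounded and Lipschitz, while $X$ itself is well posed by Theorem~\ref{XY} applied to the drift $b+\hat b$. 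The triangle inequality gives
\[
\mE\Big(\sup_{0\leq t\leq T}|X_t-X_t^n|^2\Big)\leq 2\,\mE\Big(\sup_{0\leq t\leq T}|X_t-\tilde X_t^n|^2\Big)+2\,\mE\Big(\sup_{0\leq t\leq T}|\tilde X_t^n-X_t^n|^2\Big),
\]
and it suffices to send both summands to $0$.

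For the second summand, $b_n\in\mathcal C_b^1(\mR^d)$ and $\sigma_{ij}\in\mathcal C_b^2(\mR^d)$, so Theorem~\ref{smoothdrift} applies with $b$ replaced by $b_n$: there its solution of \eqref{eqn} is exactly $\tilde X^n$ and its regularized solution is exactly $X^n$. Estimate \eqref{errorest} then reads
\[
\mE\Big(\sup_{0\leq t\leq T}|\tilde X_t^n-X_t^n|^2\Big)\leq C_1 e^{C_2\Vert b_n\Vert_{\mathcal C_b^1(\mR^d)}^2}\Big(f_n^2+(1+\sup_{x\in\mR^d}|b_n(x)|^2)\,n^{-(1-\delta)}\Big).
\]
By properties (i) and (iii) of Definition~\ref{bph}, $\Vert b_n\Vert_{\mathcal C_b^1(\mR^d)}\leq h(n)\Vert b\Vert_{L^p(\mR^d)}$ and $\sup_x|b_n(x)|\leq\Vert b_n\Vert_{\mathcal C_b^1(\mR^d)}$; hence the right-hand side is dominated by $C_1$ times the quantity in hypothesis \eqref{hfn} (with the fixed, $n$-independent constant $C_2$ sitting in the exponent), and therefore tends to $0$.

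For the first summand I would apply Theorem~\ref{main1}. Both $X$ and $\tilde X^n$ solve equations of the form \eqref{sde00} driven by the same Brownian motion $W$ with the same diffusion $\sigma$, their Itô drifts being $b+\hat b$ and $b_n+\hat b$. As $\hat b$ is bounded it belongs to $\mL_p^{q,loc}$, and since $|b|,|b_n|\in L^p(\mR^d)$ both effective drifts satisfy $(H_b)$ once $q\in[2,\infty)$ is chosen large enough that $\tfrac{d}{p}+\tfrac{2}{q}<1$, which is possible because $p>d$; moreover $\sigma$ satisfies $(H_\sigma)$ by assumption. The two solutions start from the common point $x_0$, and the correction fields cancel in the drift difference, so \eqref{sta} yields
\[
\mE\Big(\sup_{0\leq t\leq T}|X_t-\tilde X_t^n|^2\Big)\leq C\,\Vert (b+\hat b)-(b_n+\hat b)\Vert_{\mL_p^q(T)}^2=C\,\Vert b-b_n\Vert_{\mL_p^q(T)}^2 .
\]
Because $b$ and $b_n$ are time-homogeneous, $\Vert b-b_n\Vert_{\mL_p^q(T)}=T^{1/q}\Vert b-b_n\Vert_{L^p(\mR^d)}\to0$ by property (ii) of Definition~\ref{bph}. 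The constant $C$ can be kept uniform in $n$: it depends on the drifts only through the fixed bounded field $\hat b$, through $\nabla\sigma$ (bounded, as $\sigma\in\mathcal C_b^2$), and through $\Vert b_n\Vert_{L^p(\mR^d)}$, which converges to $\Vert b\Vert_{L^p(\mR^d)}$ and is hence bounded. Combining the two bounds proves \eqref{error}.

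The main obstacle is the stability step, not the smooth one. One must check that grafting the bounded Lipschitz correction $\hat b$ onto the singular drifts preserves the hypotheses of Theorem~\ref{main1} and, above all, that the constant $C$ produced there is uniform in $n$ --- for otherwise $C\Vert b-b_n\Vert_{\mL_p^q(T)}^2$ need not vanish even though $\Vert b-b_n\Vert_{L^p}\to0$. This is precisely where the boundedness of $\hat b$ and of $\nabla\sigma$, the convergence $\Vert b_n\Vert_{L^p}\to\Vert b\Vert_{L^p}$, and the time-homogeneity reduction of the $\mL_p^q(T)$-norm to the $L^p(\mR^d)$-norm enter. A secondary point requiring care is the rate matching in the smooth step: the exponential prefactor $e^{C_2\Vert b_n\Vert_{\mathcal C_b^1}^2}$ has to be controlled by \eqref{hfn}, which is exactly the quantitative reason the noise approximation, entering through $f_n$ and the rate $n^{-(1-\delta)}$, must converge fast enough to counteract the growth of $h(n)$.
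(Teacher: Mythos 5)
Your proposal coincides with the paper's own proof: the same intermediate process (your $\tilde X^n$ is the paper's $X_{n,\cdot}$, carrying $b_n$ and the Stratonovich correction but driven by the true Brownian motion), the same diagonal use of Theorem \ref{smoothdrift} with $\Vert b_n\Vert_{\mathcal{C}_b^1}\leq h(n)\Vert b\Vert_{L^p}$ matched against \eqref{hfn}, and the same appeal to Theorem \ref{main1} for the stability step, where your extra care about uniformity of the constant in $n$ (via $\Vert b_n\Vert_{L^p}\to\Vert b\Vert_{L^p}$) is a point the paper glosses over. One small repair: boundedness of $\hat b$ alone does not place it in $\mL_p^{q,loc}$, since that space requires global $L^p(\mR^d)$ integrability in the space variable; the correct (and immediate) justification is $|\hat b|\leq C\,|\nabla\sigma|$ with $|\nabla\sigma|\in\mL_p^{q,loc}$ from $(H_\sigma)$, which is exactly the observation with which the paper opens its proof.
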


\begin{proof}
First  under Assumption \ref{ass} we know that $|((\sum_{i,j=1}^d\sum_{l=1}^d\sigma_{il}\cdot\nabla_l\sigma_{ik})^{(k)})_{k=1,\cdots,d}|\in {L}^p(\mathbb{R}^d)$, therefore by Theorem \ref{XY} there exists a unique solution $(X_t(x))_{t\geq0}$ to the SDE \eqref{wzksds}. For $(b_m)_{m\in\mathbb{N}}\in\mathcal{A}(b,p,h)$, 
let $(X_{m,t})_{t\geq 0}$ be the solution to
\begin{align*}
X_{m,t}=x_0+\int_0^tb_m(X_{m,s})\dif s&+\int_0^t\sigma(X_{m,s})\dif W_s
\\&+\Big(\sum_{i,j=1}^d\sum_{l=1}^dc_{ij}\int_0^t\sigma_{il}\cdot\nabla_l \sigma_{jk}(X_{m,s})\dif s\Big)_{k=1,\cdots,d}.
\end{align*}
By Theorem \ref{smoothdrift}, we know that for any $m\geq1$,
\begin{align*}
 \mE\Big[\sup_{0\leq t\leq T}|X_{m,t}-X_{m,t}^{n}|^2\Big]\leq& C''e^{C'\Vert b_m\Vert_{\mathcal{C}_b^1(\mathbb{R}^d)}^2}(f^2_n+(1+\Vert b_m\Vert_{\mathcal{C}_b^1(\mathbb{R}^d)}^2)\frac{1}{n^{1-\delta}})
  \\\leq& C''e^{C'h(m)^2\Vert b\Vert_{L^p(\mathbb{R}^d)}^2}(f^2_n+(1+h(m)^2\Vert b\Vert_{L^p(\mathbb{R}^d)}^2)\frac{1}{n^{1-\delta}}),
\end{align*}
where $C'$ and $C''$  are positive constants, independent of $m$ and $n$, and $(X_{m,t}^{n})_{t\geq0}$ solves
\begin{align*}
X_{m,t}^{n}=x_0+\int_0^tb_m(X_{m,s}^{n})\dif s+\int_0^t\sigma(X_{m,s}^{n})\dif W_{s}^n.
\end{align*}
  By choosing $m=n$  and using the condition \eqref{hfn} we can obtain that
\begin{align*}
\lim_{n\rightarrow\infty} \mE\Big[\sup_{0\leq t\leq T}|X_{n,t}-X_{n,t}^{n}|^2\Big]=0.
\end{align*}
Then by Theorem \ref{main1},
\begin{align}\label{diff}
\mE\Big[\sup_{0\leq t\leq T}|X_t-X_t^{n}|^2\Big]&\leq \mE\Big[\sup_{0\leq t\leq T}|X_t-X_{n,t}|^2\Big]+\mE\Big[\sup_{0\leq t\leq T}|X_{n,t}-X_{n,t}^{n}|^2\Big]\nonumber\\
&\leq C^*(p,d,T,K,\Vert \nabla\sigma\Vert_{L_p(\mathbb{R}^d)},\Vert b\Vert_{L_p(\mathbb{R}^d)})\Vert b-b_n\Vert_{L^p(\mathbb{R}^d)}^2\nonumber\\&\quad\quad+\mE\Big[\sup_{0\leq t\leq T}|X_{n,t}-X_{n,t}^{n}|^2\Big].
\end{align}
Therefore \eqref{error} holds by letting $n\rightarrow\infty$. 
\end{proof}
\begin{remark}
\begin{enumerate}
    \item Theorem \ref{Wzkthy} states that an approximation to a singular SDE converges to this SDE provided that the approximation of the noise converges sufficiently fast compared to the approximation of the singular coefficient $b$, cf. condition \eqref{hfn}. Although this condition seems plausible and is true in the ``extreme'' case (one can not first let $b_n \to b$ and then $W^n \to W$), we do not know at the moment whether such a condition is really necessary. We leave this point as an open problem.
    \item It is easy to deduce a convergence rate of the Wong-Zakai approximation. However, this can arbitrarily slow, depending on the convergence rate of $b_n \to b$. For a smooth $b$, we rediscover the known convergence rate of (almost) $\frac{1}{2}$ in the classical Wong-Zakai theorem.
    \item The approximation class $\mathcal{A}(b,p,h)$ turns out to be of own interest and is discussed in more detail in \cite{KLY}.
\end{enumerate}

\end{remark}


In the end we give some examples to show how this theorem works in different cases.
\begin{example}\rm{
\begin{itemize}
\item[(1)] If for $i=1,\cdots,d$, $b^{(i)}\in\mathcal{C}_b^1(\mathbb{R}^d)$ and $|b|\in L^p(\mathbb{R}^d)$, $p>d$ and $p\geq4$, then we can simply take $b_n\equiv b$ and choose $h(n)\equiv1$. This is the special setting  of the standard Wong-Zakai theorem for regular SDEs, e.g. see \cite[VI. Section 7]{Ikeda}.
    \item[(2)] Let $d=1$ and  define
    $$b(x)=1_{[0,1]}(x), \quad x\in\mathbb{R}.$$ Then $|b|\in L^p(\mathbb{R})$ for $p\in[1,\infty]$.
 For $\chi:[0,\infty)\rightarrow(0,\infty)$ satisfying $\lim_{n\rightarrow\infty}\chi(n)=\infty$, let
    \begin{equation*}b_n(x)=
     \left\{
     \begin{aligned}
      0, &\text{ if } x\in(-\infty,-\frac{2}{\chi(n)})\cup(1+\frac{2}{\chi(n)},\infty),\\
    \frac{\chi(n)x}{2}+1, &\text{ if } x\in[-\frac{2}{\chi(n)},0), \\
     -\frac{\chi(n)x}{2}+\frac{\chi(n)+2}{2}, &\text{ if } x\in(1,1+\frac{2}{\chi(n)}], \\
     1,&\text{ if } x\in[0,1].
    \end{aligned}
    \right.
    \end{equation*}
    We find that $b_n\in\mathcal{C}_b^1(\mathbb{R})$ almost everywhere.\footnote{Theorem \ref{Wzkthy} assumes $b_n$ bounded and differentiable \emph{everywhere}, but it is easy to see that $b_n$ can be replaced by smooth versions for the price of adding arbitrary small $\epsilon$ in the following bounds. This will not affect the overall conclusion.} Furthermore,
   $$
    \(\int_{-\infty}^{+\infty}|b_n(x)-b(x)|^p\dif x\)^{1/p}=2\(\frac{2}{\chi(n)(p+1)}\)^{1/p},
    $$
    and
    $$\sup_{x\in\mathbb{R}}|\frac{ \dif b_n(x)}{\dif x}|+\sup_{x\in\mathbb{R}}|b_n(x)|\leq \frac{\chi(n)+2}{2}\Vert b\Vert_{L^p(\mathbb{R})}=\frac{\chi(n)+2}{2}.$$
   Now we take
    $$\chi(n):=\max({2\sqrt{|\log(n^\alpha)|}}-2,0), $$
    $\alpha\in(0,1)$, and choose $\alpha$ and $\delta$ such that $\lim_{n\rightarrow\infty} n^\alpha (f_n^2+\frac{(1+\log n^\alpha)}{n^{1-\delta}})=0$, assuming that this is possible. Here $f_n$ and $\delta$ keep the same meaning as in Theorem \ref{Wzkthy}. For instance, if $f_n = 0$, given any $\alpha \in (0,1)$, we can choose $\delta$ such that $\alpha + \delta < 1$.  For  $h(n) := \frac{\chi(n)+2}{2}$, this implies
    $$\lim_{n\rightarrow\infty} e^{h(n)^2\Vert b\Vert_{L^p(\mathbb{R}^d)}^2}(f^2_n+(1+h(n)^2\Vert b\Vert_{L^p(\mathbb{R}^d)}^2)\frac{1}{n^{1-\delta}})=0. $$
    Therefore $(b_n)_{n\geq1}\in\mathcal{A}(b,p,h)$ and fulfills all conditions in Theorem  \ref{Wzkthy}. We can apply this approximation sequence to the SDE with singular drift $b$ and obtain the Wong-Zakai approximation result.

    Actually, we can also take a mollification sequence of $b$ by convoluting with a smooth function. That is to say, let
    $$b_n(x):=b\ast g_n(x):=\sqrt{\frac{\kappa_n}{2\pi}}\int_{-\infty}^{+\infty} b(x-y)e^{\frac{-\kappa_ny^2}{2}}\dif y,$$
where $\kappa_n>0$  we will choose later.
Notice that
such $b_n$ is smooth with bounded derivatives, and there exists a positive constant $C>0$, independent of $n$, such that
 $$\sup_{x\in\mathbb{R}}|\frac{ \dif b_n(x)}{\dif x}|+\sup_{x\in\mathbb{R}}|b_n(x)|\leq C(\kappa_n+1).$$
 By taking
 $$\kappa_n:=\max(\frac{\sqrt{|\log(n^\alpha)|}}{C}-1,0), $$
 with $\alpha\in(0,1)$ such that $\lim_{n\rightarrow\infty} n^\alpha (f_n^2+\frac{1+\log n^\alpha}{n^{1-\delta}})=0$
and
 $h_n:=C(\kappa_n+1)$, we see that \eqref{hfn} holds.
 As before, we know that such approximation can be applied in order to establish the Wong-Zakai approximation to the singular SDE.
\end{itemize}}
\end{example}

\section{Support theorem for Singular SDEs}
Let $(\Omega,\mathcal{F},(\mathcal{F}_t)_{t\geq0},\mP)$ be a probability spaces and $(W_t)_{t\geq0}$ a Wiener process in $\mathbb{R}^d$. 
 Consider the SDE
\begin{align}\label{Wor}
X_t = x_0 + \int_0^tb(s,X_s) \, \dif s + \int_0^t\sigma(s,X_s)\circ\dif W_s,\quad t \in [0,T].
\end{align}
We are interested in describing the topological support of the distribution of the solution $(X_t)_{t \in [0,T]}$ in the space $\mathcal{C}([0,T],\mathbb{R}^d)$ equipped with the topology of uniform convergence when $b$ is a singular drift.
The classical strategy to derive a description of the support uses the Wong-Zakai theorem, { see e.g. \cite{Ikeda}, \cite{GP}, \cite{MZ}.}   Here, we use a different approach which turns out to be much easier in our context. Namely, we use Girsanov theory to show that actually the support of the distribution of the solution to \eqref{Wor} is the full canonical space $\mathcal{C}([0,T],\mathbb{R}^d)$, which essentially is because of the non degenerated noise.

\begin{theorem}\label{suppthy}
Assume that the conditions  in Assumption \ref{ass} are satisfied. Let
$$\Omega_{x_0}([0,T],\mathbb{R}^d):=\{\omega\in\mathcal{C}([0,T],\mathbb{R}^d):w_0=x_0\}.$$
 Then
$$\textup{supp } \mu=\Omega_{x_0}([0,T],\mathbb{R}^d),$$
where $\textup{supp }\mu$ is the topological support of the distribution of the solution $(X_t(x_0))_{t \in [0,T]}$ to the SDE \eqref{Wor} in $\mathcal{C}([0,T],\mathbb{R}^d)$.
\end{theorem}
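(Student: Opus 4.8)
The plan is to eliminate the singular drift $b$ via a Girsanov change of measure, thereby reducing the support problem for \eqref{Wor} to the already-understood support problem for the driftless (or purely multiplicative) diffusion, for which the classical Stroock--Varadhan support theorem \cite{SV} applies. Concretely, I first rewrite the Stratonovich SDE \eqref{Wor} in It\^o form, so that it becomes
\begin{align*}
X_t = x_0 + \int_0^t \tilde b(s,X_s)\,\dif s + \int_0^t \sigma(s,X_s)\,\dif W_s,
\end{align*}
where $\tilde b = b + \tfrac12 (\text{It\^o--Stratonovich correction})$; the correction term is bounded and continuous because $\sigma \in (H_\sigma)$ controls $\nabla\sigma$, so $\tilde b$ inherits the same $\mathbb{L}_p^q$-type integrability as $b$. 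Under Assumption \ref{ass} and by Theorem \ref{XY}, this SDE has a unique strong solution whose Novikov/Khasminskii-type exponential moments for $\int_0^T|\sigma^{-1}\tilde b|^2\,\dif s$ are finite (uniform ellipticity gives $\sigma^{-1}$ bounded, and the $\mathbb{L}_p^q$-bound together with the Krylov estimate controls the exponential moment). Hence the Girsanov density
\begin{align*}
\frac{\dif \mathbb{Q}}{\dif \mathbb{P}} = \exp\Big(-\int_0^T \big(\sigma^{-1}\tilde b\big)(s,X_s)\,\dif W_s - \tfrac12\int_0^T \big|\sigma^{-1}\tilde b\big|^2(s,X_s)\,\dif s\Big)
\end{align*}
is a genuine probability density, and under $\mathbb{Q}$ the process $X$ solves the driftless It\^o SDE driven by a $\mathbb{Q}$-Brownian motion.

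The key observation is that $\mathbb{Q}$ and $\mathbb{P}$ are mutually absolutely continuous on $\mathcal{F}_T$, and the map $\omega \mapsto X_\cdot(\omega)$ is the same measurable map under both measures. Therefore the law of $X$ under $\mathbb{P}$ and the law of $X$ under $\mathbb{Q}$ are mutually absolutely continuous as measures on $\mathcal{C}([0,T],\mathbb{R}^d)$. A standard fact is that mutually absolutely continuous measures have the \emph{same topological support}, since a point $w$ lies in the support iff every open neighborhood has positive measure, and positivity is preserved under equivalence of measures. Thus
\begin{align*}
\textup{supp}\,\mu = \textup{supp}\,\big(\Law_{\mathbb{Q}}(X)\big),
\end{align*}
reducing everything to the support of the \emph{driftless} non-degenerate diffusion.

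For the driftless diffusion under $\mathbb{Q}$, I apply the classical Stroock--Varadhan support theorem \cite{SV}: because $\sigma$ is uniformly elliptic (so $a = \sigma\sigma^*$ is uniformly nondegenerate) and $a$ is continuous in $x$ locally uniformly in $t$, the support is the closure of the set of solutions to the associated controlled ODEs $\dot\phi = \sigma(t,\phi)\,\dot u$ as $u$ ranges over absolutely continuous controls. Nondegeneracy of $\sigma$ makes this control system exactly controllable, so the reachable set is dense, yielding that the support is all of $\Omega_{x_0}([0,T],\mathbb{R}^d)$. (One must be careful that \cite{SV} requires somewhat more regularity on $\sigma$; here $\sigma \in (H_\sigma)$ gives continuity and ellipticity, which suffices for the support characterization, though the Stratonovich correction arising in the classical statement is harmless since we have already moved the relevant correction into $\tilde b$ and then removed it.) Combining the two displays gives $\textup{supp}\,\mu = \Omega_{x_0}([0,T],\mathbb{R}^d)$.

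The main obstacle I anticipate is \emph{justifying that the Girsanov density is a true martingale}, i.e.\ verifying the exponential integrability $\mathbb{E}\exp\big(\tfrac12\int_0^T|\sigma^{-1}\tilde b|^2(s,X_s)\,\dif s\big) < \infty$ when $b$ is only in $\mathbb{L}_p^q$ rather than bounded. This is precisely where the $\mathbb{L}_p^q$-integrability with $\tfrac{d}{p}+\tfrac{2}{q}<1$ and the Krylov-type estimate (used already in Theorem \ref{main1} via Khasminskii's lemma) must be invoked to control the exponential moment of the additive functional $\int_0^T|\sigma^{-1}\tilde b|^2(s,X_s)\,\dif s$; this is a standard but nontrivial step. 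A secondary, more bookkeeping-level difficulty is confirming that \cite{SV} applies under the exact regularity of $\sigma$ assumed in $(H_\sigma)$ rather than the smoother hypotheses sometimes stated there; this should follow from a routine approximation or from the local uniform continuity of $a$ already assumed.
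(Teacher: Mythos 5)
Your proposal is correct and follows essentially the same route as the paper: a Girsanov change of measure to trade the drift for a density (the paper adds $b$ to the driftless Stratonovich solution $Y$, while you remove $\tilde b$ from $X$ --- mirror images of the same argument), Khasminskii/Krylov-type exponential estimates to verify Novikov's condition, the observation that equivalent laws have equal topological supports, and the Stroock--Varadhan theorem giving full support for nondegenerate diffusions. One small slip worth noting: under $(H_\sigma)$ the It\^o--Stratonovich correction $\tfrac12\sigma\nabla\sigma$ is not bounded and continuous but only in $\mathbb{L}_p^{q,loc}$, which --- as your argument in fact only uses --- is all that is needed for $\tilde b$ to have the required integrability.
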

\begin{proof}
First we consider the following equation
\begin{align*}
Y_t=x_0+\int_0^t\sigma(s,Y_s)\circ\dif W_s,\quad t\geq0.
\end{align*}
By Theorem \ref{XY}, there is an unique and strong solution $(Y_t(x_0))_{t\geq0}$ to the above SDE.

Let $\textup{supp }\mu_Y$ be the topological support of the distribution of  $(Y_t(x_0))_{t\geq0}$  in $\mathcal{C}([0,T],\mathbb{R}^d)$.
Under the condition $(H_\sigma)$, by \cite[Theorem 3.1]{SV} we know that
$$\textup{supp } \mu_Y=\Omega_{x_0}([0,T],\mathbb{R}^d).$$
 Besides, because of \cite[Lemma 4.1 (ii)]{XXZZ},  for  any constant $\kappa>0$, for any $0\leq T<\infty$, we have
	\begin{align}\label{ex}
	\mE\exp\bigg\{\kappa\int_0^T|b(s,Y_s)|^2\dif s\bigg\}\leq C(T,\kappa,d,{ p},q,\|b\|_{\mathbb{L}_p^q(T)})<\infty,
	\end{align}
i.e. Novikov's condition is fulfilled. 	As a result, we have
	\begin{align*}
	\mE\rho_T:=\mE\exp\bigg\{{\int_0^T\big[b^*\sigma^{-1}\big](s,Y_s)\dif W_s}-\frac{1}{2}\int_0^T\big[b^*(\sigma\sigma^*)^{-1}b\big](s,Y_s)\dif s\bigg\}=1.
	\end{align*}
Then by the  Girsanov theorem, for any bounded functional $f$ defined on $\mathcal{C}([0,T],\mathbb{R}^d)$,
$$\mE[f(X_\cdot)]=\mE[\rho_Tf(Y_\cdot)], \quad 0\leq T<\infty.$$
This shows that the distribution of the solution $(X_t)_{t\geq0}$ to SDE \eqref{Wor} in $\mathcal{C}([0,T],\mathbb{R}^d)$ is absolutely continuous with respect to the distribution of  $(Y_t)_{t\geq0}$  in $\mathcal{C}([0,T],\mathbb{R}^d)$ {{and vice versa}}, therefore they share the same support, which completes our proof.
\end{proof}

\renewcommand\thesection{A}
\section{Auxiliary results on Kolmogorov backward equations}

In this part, we collect the known results on regularity estimates for corresponding PDEs. 
 \cite[Theorem 3.2]{XXZZ} and \cite[Theorem 2.1]{LX} show that the   Kolmogorov backward equation corresponding to the solution to SDE \eqref{sde00} can be solved in $\mathbb{L}_p^q(T)$ under the following assumption.
 \begin{assumption}\label{Aa}
  $(a_{ij})_{1\leq i,j\leq d}$ is uniformly continuous in $x\in\mathbb{R}^d$ locally uniformly with respect to $t\in\mathbb{R}_+$, and for some $\delta\geq 1$ and for $(t,x)\in[0,\infty)\times\mathbb{R}^d$,
  $$\delta^{-1}|\xi|^2\leq |a(t,x)\xi|^2\leq \delta|\xi|^2,\quad \forall \xi\in\mathbb{R}^d.$$
  \end{assumption}

\begin{theorem}\cite[Theorem 2.1]{LX}\label{pde}
Let Assumption \ref{Aa} be satisfied, and let  $f\in{{\mathbb{L}}}_p^{q}(T)$ with $T>0$, $p,q\in(1,\infty)$ and $d/p+2/q<1$. Assume that $b$ is measurable and that $|b|\in{{\mathbb{L}}}_{p_1}^{q_1}(T)$ with $p_1\in[p,\infty)$, $q_1\in[q,\infty)$ such that  $d/p_1+2/q_1<1$. Then in ${\sH}_{2,p}^{q}(T)$, there is a unique solution to the equation \begin{align}\label{pdelambda}
\partial_tu+\frac{1}{2}a_{ij}\partial_{ij}^2u+b\cdot\nabla u+f=0, \quad t\in[0,T],\quad u(T,x)=0,
\end{align}
which further satisfies the following regularity estimates:
for any $\alpha\in [0,2-\tfrac{2}{q})$, there exists a  constant $C_T=C(d,{ p},q,\|b\|_{\mL^{ q_1}_{ p_1}(T)},T)$ satisfying $\lim_{T\to0} C_T=0$  such that
		\begin{align}
	\Vert u\Vert_{\mathbb{H}_{\alpha, p}^\infty(T)}\leq C_T\Vert f\Vert_{\mathbb{L}^{q}_{p}(T)}.\label{esss2}
	\end{align}
Furthermore, we have
	\begin{align}
	\|u\|_{\mL^\infty(T)}\leq \hat C_T\|f\|_{\mL^q_{ p}(T)}, \quad\text{if}\quad 2/q+d/p<2,\label{es2}
	\end{align}
	and
	\begin{align}
	\|\nabla u\|_{\mL^\infty(T)}\leq \hat C_T\|f\|_{\mL^q_{ p}(T)},\quad\text{if}\quad 2/q+d/p<1,   \label{es3}
	\end{align}
where $\hat C_T>0$ is a constant satisfying $\lim_{T\to0} \hat C_T=0$.
\end{theorem}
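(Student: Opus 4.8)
The plan is to solve \eqref{pdelambda} by regarding the first-order term $b\cdot\nabla u$ as a perturbation of the non-degenerate parabolic operator $\mathcal{L}^0_t u:=\partial_t u+\tfrac12 a_{ij}\partial^2_{ij}u$ and invoking $L^q$-$L^p$ maximal regularity, in the spirit of Krylov--R\"ockner and Xie--Zhang. First I would establish that for $f\in\mL_p^q(T)$ the drift-free problem $\mathcal{L}^0_t u+f=0$, $u(T,\cdot)=0$ has a unique solution $u\in\sH_{2,p}^q(T)$ with $\|\partial_t u\|_{\mL_p^q(T)}+\|u\|_{\mH_{2,p}^q(T)}\le N\|f\|_{\mL_p^q(T)}$. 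For constant $a$ this is the classical parabolic Calder\'on--Zygmund estimate obtained via Fourier multipliers; the uniform ellipticity and, crucially, the uniform continuity of $a$ in $x$ from Assumption \ref{Aa} allow one to pass to variable coefficients by freezing the coefficients, a spatial partition of unity, and absorbing the resulting small commutator error into the left-hand side.

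The key quantitative step is to show that on a short interval the drift term is contractive in the maximal-regularity norm. Choosing exponents $\tilde p,\tilde q$ by $\tfrac1p=\tfrac1{p_1}+\tfrac1{\tilde p}$ and $\tfrac1q=\tfrac1{q_1}+\tfrac1{\tilde q}$ (admissible since $p_1\ge p$ and $q_1\ge q$), H\"older's inequality gives $\|b\cdot\nabla u\|_{\mL_p^q(T)}\le\|b\|_{\mL_{p_1}^{q_1}(T)}\|\nabla u\|_{\mL_{\tilde p}^{\tilde q}(T)}$. The subcriticality conditions $d/p+2/q<1$ and $d/p_1+2/q_1<1$ leave enough room for a parabolic Gagliardo--Nirenberg interpolation $\|\nabla u\|_{\mL_{\tilde p}^{\tilde q}(T)}\le N\,T^{\varepsilon}\big(\|u\|_{\mH_{2,p}^q(T)}+\|\partial_t u\|_{\mL_p^q(T)}\big)$ with some $\varepsilon>0$, the small power of $T$ being produced by the terminal condition $u(T)=0$.

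I would then define $\Gamma\colon v\mapsto u$, where $u$ solves $\mathcal{L}^0_t u=-b\cdot\nabla v-f$, $u(T)=0$, on $\sH_{2,p}^q(T_0)$; by the previous two steps $\Gamma$ is a contraction as soon as $N\,T_0^{\varepsilon}\|b\|_{\mL_{p_1}^{q_1}(T_0)}<1$, giving the unique solution together with an a priori bound whose constant $C_{T_0}$ tends to $0$ with $T_0$. For arbitrary $T$ the solution is patched across finitely many subintervals of length $T_0$ using linearity and the time-shift structure of the equation. Finally, the regularity estimates follow from embeddings: \eqref{esss2} comes from the trace/interpolation embedding $\sH_{2,p}^q(T)\hookrightarrow C([0,T];H_p^{2-2/q}(\mathbb{R}^d))$ composed with $H_p^{2-2/q}\hookrightarrow H_p^{\alpha}$ for $\alpha<2-2/q$, while \eqref{es2} and \eqref{es3} are the anisotropic parabolic Sobolev embeddings of $\sH_{2,p}^q(T)$ into $\mL^\infty(T)$ when $2/q+d/p<2$ and of its gradient into $\mL^\infty(T)$ (indeed into a H\"older space) when $2/q+d/p<1$; the constants inherit $\lim_{T\to0}\hat C_T=0$ because the relevant embedding constants carry a positive power of $T$.

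The hardest part will be the exponent bookkeeping in the perturbation and embedding steps. The conditions $d/p+2/q<1$ and $d/p_1+2/q_1<1$ are used precisely to split the conjugate exponents so that $b\cdot\nabla u$ returns to $\mL_p^q(T)$ and to gain enough regularity on $\nabla u$ to close the contraction, all while keeping the $T$-dependence explicit so that $C_T\to0$; this quantitative control is the delicate point on which the whole argument — and its later use in Zvonkin's transformation — rests.
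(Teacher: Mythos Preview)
The paper does not give its own proof of this statement: Theorem~\ref{pde} is imported verbatim from \cite[Theorem 2.1]{LX} (with \cite[Theorem 3.2]{XXZZ} mentioned as a closely related source), and the appendix only records it for later use in Lemma~\ref{hom}. So there is nothing in the paper to compare your argument against.

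That said, your outline is the standard route taken in those references and is essentially correct: maximal $L^q$--$L^p$ regularity for the non-degenerate second-order operator via freezing of coefficients and a partition of unity, then a contraction argument treating $b\cdot\nabla u$ as a perturbation, with the subcritical conditions $d/p+2/q<1$ and $d/p_1+2/q_1<1$ providing both the H\"older splitting and the positive power of $T$ that makes the map contractive and forces $C_T\to0$. Two points where care is needed in a full write-up: (i) when patching over subintervals to reach a general $T$, the terminal datum is no longer zero, so one must either subtract it off or use the version of maximal regularity with a nontrivial trace in $B^{2-2/q}_{p,q}$; and (ii) the claim that the embedding constants in \eqref{es2}--\eqref{es3} carry a \emph{positive} power of $T$ does not come from the Sobolev embedding alone but from combining it with the a~priori estimate $\|u\|_{\sH_{2,p}^q(T)}\le C_T\|f\|_{\mL_p^q(T)}$ already obtained. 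Neither point is a gap in the strategy, only in the level of detail.
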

 For  $l=1,\cdots,d$, we consider the unique solution $u_b^{(l)}\in \sH_{2,p}^q(T)$ to the equation
$$
\partial_tu^{(l)}+\frac{1}{2}\sum_{i,j=1}^da_{ij}\partial_{ij}^2u^{(l)}+b\cdot\nabla u^{(l)}+b^{(l)}=0, \quad t\in[0,T],\quad u^{(l)}(T,x)=0,\quad l=1,\cdots,d.
$$
Let $U_b:=(u_b^{(l)})_{1\leq l\leq d}$ and define \begin{align}\label{pdephi}\Phi^b(t,x):=x+U_b(t,x) \text{ for } (t,x)\in[0,T]\times\mathbb{R}^{d}.\end{align} The map $\Phi^b(t,x)$ is usually called \emph{Zvonkin's transformation map} in the literature (see, e.g. \cite{Zhang2011}, \cite{XXZZ}, \cite{ZZ}, \cite{XZ2}). It was proven to be a $\mathcal{C}^1$-diffeomorphism on $\mathbb{R}^d$ for each $t\in[0,T]$ when $T$ is small. From a similar argument as in \cite{Zhang2011}, \cite{XXZZ} and \cite{Zhang2017}, based on the regularity estimates \eqref{esss2}, \eqref{es2} and \eqref{es3}, we can obtain:
\begin{lemma}\label{hom}
Under Assumption \ref{ass}, there exists a positive constant $\epsilon=\epsilon(K,d,p,q,\lambda)$ such that if $t_0-s_0\leq \epsilon$ and $\Vert b\Vert_{\mL_p^q([s_0,t_0])}\leq \lambda$, then for each $t\in[s_0,t_0]$, $x\rightarrow\Phi^b(t,x)$ is a $\mathcal{C}^1$-diffeomorphism with
\begin{align}\label{c1hommo}
\frac{1}{2}|x-y|\leq |\Phi^b(t,x)-\Phi^b(t,y)|\leq \frac{3}{2}|x-y|.
\end{align}
Moreover, let $\delta:=\frac{1}{2}-\frac{d}{2p}-\frac{1}{q}>0$, and define $\Psi^b:=(\Phi^b)^{-1}$, i.e. the inverse of $\Phi^b$. Then. the following holds:
\begin{itemize}
\item[(1)] There exists a universal positive constant $C$ such that $\Vert\nabla\Phi^b(t,\cdot)\Vert_{\infty}+\Vert\nabla{\Psi^b}(t,\cdot)\Vert_\infty\leq C$.
\item[(2)] Let $b'\in \mL_p^q(T)$ with $\Vert b'\Vert_{\mL_p^q([s_0,t_0])}\leq \lambda$ and $\Phi^{b'}$ defined as above. Then we have for some $N>0$,
\begin{align*}
\Vert\Phi^b-&\Phi^{b'}\Vert_{\mL^\infty([s_0,t_0])}+\Vert\nabla\Phi^b-\nabla\Phi^{b'}\Vert_{\mL_p^q([s_0,t_0])}\leq N\Vert b-{b'}\Vert_{\mL_p^q([s_0,t_0])}.
\end{align*}
Here $N$ depends on $\lambda, K, p,q,d,\delta$ and the modulus of continuity of $(a_{ij})_{1\leq i,j\leq d}$.
 \item[(3)] Let $\tilde\sigma^b(t,y):=[\nabla\Phi^b\cdot\sigma]\circ({\Psi^b}(t,y))$ and similarly define $\tilde\sigma^{b'}$ with $\Vert b'\Vert_{\mL_p^q([s_0,t_0])}\leq \lambda$.  Then  $\tilde\sigma^b$ satisfies {\bf($H_{\tilde\sigma^b}$)}, $\tilde\sigma^{b'}$ satisfies {\bf($H_{\tilde\sigma^{b'}}$)}, and there exists a positive constant $N$ depending on $\lambda,K,p,q,d,\delta$, and the modulus of continuity of $(a_{ij})_{1\leq i,j\leq d}$ such that
\begin{align*}
\Vert \tilde\sigma^b-\tilde\sigma^{b'}\Vert_{\mL_p^q([s_0,t_0])}\leq N\Vert b-{b'}\Vert_{\mL_p^q([s_0,t_0])}.
\end{align*}
\end{itemize}
\end{lemma}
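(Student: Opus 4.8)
The plan is to read everything off the regularity estimates for the Kolmogorov equation \eqref{pdelambda} furnished by Theorem \ref{pde}, applied componentwise to the functions $u_b^{(l)}$ that build $U_b$ and hence $\Phi^b$ through \eqref{pdephi}. The starting point is that, with $f=b^{(l)}$, estimate \eqref{es3} (legitimate since $d/p+2/q<1$) gives $\|\nabla U_b\|_{\mL^\infty([s_0,t_0])}\le \hat C_{t_0-s_0}\,\lambda$ with $\lim_{T\to0}\hat C_T=0$. So I would first fix $\epsilon=\epsilon(K,d,p,q,\lambda)$ small enough that $\|\nabla U_b\|_\infty\le\tfrac12$ whenever $t_0-s_0\le\epsilon$ and $\|b\|_{\mL_p^q([s_0,t_0])}\le\lambda$. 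Since $\Phi^b(t,x)-\Phi^b(t,y)=(x-y)+(U_b(t,x)-U_b(t,y))$ with $|U_b(t,x)-U_b(t,y)|\le\tfrac12|x-y|$, the two-sided bound \eqref{c1hommo} is immediate, forcing injectivity and properness of $\Phi^b(t,\cdot)$. The $\mathcal{C}^1$-regularity and the joint continuity of the derivative come from the embedding $H^\alpha_p(\mathbb{R}^d)\hookrightarrow\mathcal{C}^1(\mathbb{R}^d)$ for any $\alpha\in(1+\tfrac{d}{p},\,2-\tfrac{2}{q})$ — such $\alpha$ exists precisely because $\tfrac{d}{p}+\tfrac{2}{q}<1$ — combined with \eqref{esss2}. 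As $\nabla\Phi^b=I+\nabla U_b$ lies within $\tfrac12$ of the identity, its Jacobian is everywhere invertible; together with properness, the Hadamard global inverse function theorem makes $\Phi^b(t,\cdot)$ a global $\mathcal{C}^1$-diffeomorphism, and a Neumann series gives $\|\nabla\Phi^b\|_\infty\le\tfrac32$ and $\|\nabla\Psi^b\|_\infty=\|(\nabla\Phi^b)^{-1}\circ\Psi^b\|_\infty\le2$, which is claim (1).

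For (2) I would linearise. Writing $w^{(l)}:=u_b^{(l)}-u_{b'}^{(l)}$ and subtracting the two defining PDEs, $w^{(l)}$ solves an equation of the form \eqref{pdelambda} with drift $b$, zero terminal data, and source $g^{(l)}:=(b-b')\cdot\nabla u_{b'}^{(l)}+(b^{(l)}-b'^{(l)})$. Using $\|\nabla u_{b'}^{(l)}\|_\infty\le\hat C_T\lambda$ from \eqref{es3}, one bounds $\|g^{(l)}\|_{\mL_p^q([s_0,t_0])}\le C\|b-b'\|_{\mL_p^q([s_0,t_0])}$. Feeding this into \eqref{es2} controls $\|w^{(l)}\|_{\mL^\infty}$, and into \eqref{esss2} with some $\alpha\in(1,2-\tfrac{2}{q})$ (note $q>2$ is forced by $\tfrac{d}{p}+\tfrac2q<1$) controls $\|\nabla w^{(l)}\|_{\mL_p^q}$ after inserting a factor $(t_0-s_0)^{1/q}$ to pass from the $L^\infty$-in-time Bessel norm to $\mL_p^q$. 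Summing over $l$ gives (2), and the Lipschitz property of $\Psi^b$ transfers this to $\|\Psi^b-\Psi^{b'}\|_\infty\le C\|b-b'\|_{\mL_p^q}$ exactly as in \eqref{Psi12}.

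Claim (3) is where the real work sits. Since $\nabla\Phi^b=I+\nabla U_b$ differs from the identity by at most $\tfrac12$ and $a=\sigma\sigma^*$ is uniformly elliptic with constant $K$, the matrix $\tilde\sigma^b(\tilde\sigma^b)^*=\big(\nabla\Phi^b\,a\,(\nabla\Phi^b)^*\big)\circ\Psi^b$ stays uniformly elliptic with a constant depending only on $K$; uniform continuity of $\tilde a^b$ follows from that of $a$, the uniform Hölder continuity of $\nabla U_b$ coming from \eqref{esss2}, and the bi-Lipschitz character of $\Psi^b$. For $|\nabla\tilde\sigma^b|\in\mL_p^q$, the crucial point is that the solutions live in $\sH_{2,p}^q(T)$, so $\nabla^2U_b\in\mL_p^q$; expanding $\nabla_y\big([\nabla\Phi^b\cdot\sigma]\circ\Psi^b\big)$ by the chain rule produces the terms $\nabla^2U_b\cdot\sigma$ and $(I+\nabla U_b)\cdot\nabla\sigma$, each a product of an $\mL_p^q$-function with a bounded one, composed with $\Psi^b$; a change of variables whose Jacobian factors are controlled by (1) keeps these in $\mL_p^q$, so $(H_{\tilde\sigma^b})$ holds.

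Finally, for the stability of $\tilde\sigma^b-\tilde\sigma^{b'}$ I would split it as $\big([(\nabla\Phi^b-\nabla\Phi^{b'})\cdot\sigma]\circ\Psi^b\big)+\big([\nabla\Phi^{b'}\cdot\sigma]\circ\Psi^b-[\nabla\Phi^{b'}\cdot\sigma]\circ\Psi^{b'}\big)$. The first term is bounded in $\mL_p^q$ by $\|\nabla\Phi^b-\nabla\Phi^{b'}\|_{\mL_p^q}$ through a change of variables, hence by (2). The second term — one function composed with two nearby diffeomorphisms — is the main obstacle: since $F:=\nabla\Phi^{b'}\cdot\sigma$ is only Sobolev (its gradient is in $\mL_p^q$, not bounded), I would invoke the Hardy–Littlewood maximal-function inequality $|F(x)-F(y)|\le C_d|x-y|\big(\mathcal{M}|\nabla F|(x)+\mathcal{M}|\nabla F|(y)\big)$ together with the $L^p$-boundedness of $\mathcal{M}$, applied with $x=\Psi^b$, $y=\Psi^{b'}$ and the bound $\|\Psi^b-\Psi^{b'}\|_\infty\le C\|b-b'\|_{\mL_p^q}$ from (2), to control this term again by $\|b-b'\|_{\mL_p^q}$. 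Collecting the two pieces yields the asserted estimate, with constants tracking the stated dependence on $\lambda,K,p,q,d,\delta$ and the modulus of continuity of $a$.
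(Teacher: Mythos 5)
Your proposal is correct and takes essentially the same route the paper itself indicates: the paper offers no self-contained proof, only a remark deferring to the regularity estimates of Theorem \ref{pde} together with the arguments of Zhang (\emph{Ann. Appl. Probab.}, Theorem 6.1 and Lemma 6.2 of \cite{Zhang2017}), and your write-up --- small-time smallness of $\nabla U_b$ via \eqref{es3}, the Hadamard/Neumann-series inversion giving \eqref{c1hommo} and (1), linearizing the PDE for $u_b^{(l)}-u_{b'}^{(l)}$ with source $(b-b')\cdot\nabla u_{b'}^{(l)}+(b^{(l)}-b'^{(l)})$ for (2), and the Hardy--Littlewood maximal-function inequality combined with change of variables to handle $F\circ\Psi^b-F\circ\Psi^{b'}$ in (3) --- is precisely that standard argument spelled out in full.
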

\begin{remark}
Using the well-posedness result in Theorem \ref{pde}, the proof of the stated lemma use now standard techniques, explained in a series of works. We give an outline of the way on how to get it. First,  based on the regularity estimates from Theorem \ref{pde} (see also \cite[Theorem 3.2]{XXZZ}, by the same argument of obtaining \cite[Theorem 6.1]{Zhang2017}, correspondingly we can also get the same results for $p,q\in[2,\infty)$ and $\frac{d}{p}+\frac{2}{q}<1$. Then, following the proof of \cite[Lemma 6.2]{Zhang2017}, we can show the above conclusion.
\end{remark}

\subsection*{Acknowledgements}
\label{sec:acknowledgements}

CL, SR and MS acknowledge financial support by the DFG via Research Unit FOR 2402.


\begin{thebibliography}{100}
\bibitem{BMS} Bally V., Millet A. and Sanz-Sol\'e M.: Approximation and support theorem in H\"older norm for parabolic stochastic partial differential equations. {\it Ann. of Prob.} {\bf 23} (1995) 178-222.
 \bibitem{CG} Catellier R. and   Gubinelli M.:	 Averaging along irregular curves and regularisation of ODEs. {\it Stochastic Process. Appl.} {\bf 126} (2016) 2323--2366.
\bibitem{FF1}Fedrizzi  E.  and Flandoli F.: Pathwise uniqueness and continuous dependence of SDEs with non-regular drift. {\it Stochastics} {\bf{83}} (2011) 241-257.
        \bibitem{GG} Galeati L.  and Gubinelli M.: Noiseless regularisation by noise. arXiv:2003.14264 (2020).
        \bibitem{GP} Gy\"ongy I. and Pr\"ohle T.: On the approxiamtion of stochastic differential equation and on Strock-Varadhan's support theorem. {\it Computers Math. Applic.} {\bf 19} (1990) 65--70.
            \bibitem{GS} Gy\"ongy I. and Shmatkov A.: Rate of convergence of Wong-Zakai approximations for stochastic partial differential equations. {\it Appl Math optim.} {\bf 54} (2006) 315-341.
            \bibitem{GPS} Gy\"ongy I. and Stinga P.: Rate of convergence of Wong-Zakai approximations for stochastic partial differential equations. {\it Seminar on Stochastic Analysis, Random Fields and Applications VII: Progress in Probability, Birkh\"auser, Basel} {\bf 67} (2012) 95-130.
            \bibitem{HP} Hairer M. and Pardoux \'E: A Wong-Zakai theorem for stochastic PDEs. {\it J. Math. Soc. Japan} {\bf{67}} (2015) 1551-1604.
                \bibitem{FHP} Harang F. and Perkowski N.: $\mathcal{C}^\infty$-Regularization of ODEs perturbed by noise. {\it Stochastics and Dynamics} (2021).
     \bibitem{Ikeda} Ikeda N.  and Watanabe S.: Stochastic Differential Equations and Diffusion Processes. North-Holland Publishing Company, Amsterdam Oxford New York.  1981
                  \bibitem{INY} Ikeda N., Nakao S. and Yamato Y.: A class of approximations of Brownian motion. {\it Publ. RIMS, Kyoto Univ.} {\bf 13} (1977) 285-300.
         \bibitem{KLY} Kohatsu-Higa A., Lejay A. and Yasuda K.: Weak rate of convergence of the Euler-Maruyama scheme for stochastic differential equations with non-regular drift. {\it Journal of Computational and Applied Mathematics} {\bf 326} (2017) 138-158.

		
         \bibitem{Kr-Ro}Krylov N. V. and R\"ockner M.: Strong solutions of stochastic equations with singular time dependent drift.
		{\it Probab. Theory Related Fields} {\bf{131}} (2005) 154--196.
\bibitem{Levy} L\'evy P.: Processus stochastiques et mouvement brownien. Gauthier-Villars, Paris, 1948.

\bibitem{Mcshane} McShane E. J.: Stochastic differential equations and models of random processes. {\it Proc. 6-th Berkeley Symp. on Math. Statist. and Prob.} {\bf 3} (1970) 374-381.
	 \bibitem{LX}  Ling C., Xie L.: Strong solutions of stochastic differential equations with coefficients in mixed-norm spaces. {\it Potential Anal.} {https://doi.org/10.1007/s11118-021-09913-4} (2021).
     \bibitem{K} Von der L\"uhe K.: Pathwise uniqueness for stochastic differential equations with singular drift and nonconstant diffusion. {\it Phd thesis. Bielefeld University.}  https://core.ac.uk/download/pdf/211817144.pdf.	 (2018).
         \bibitem{MZ} Ma T. and  Zhu R.: Wong-Zakai approximation and support theorem for SPDEs with locally monotone coefficients.
{\it Journal of Mathematical Analysis and Applications} {\bf 469} (2019) 623-660.
\bibitem{N} Nakayama T.: Support theorem for mild solutions of SDEs in Hilbert spaces. {\it J. Math. Sci. Univ. Tokyo} {\bf 11} (2004) 245-311.

             \bibitem{SV}Stroock D. and Varadhan S.: On the support of diffusion processes with applications to the strong maximum principle. {\it Procesdings of the Sixth Berkeley Symposium on Mathematical Statistics and Probability} {\bf 3} (1972) 333-359.
                          \bibitem{KT} Twardowska K.: Wong-Zakai approximations for stochastic differential equations. {\it Acta Applicandae Mathematica} {\bf 43} (1996) 317-359.
              \bibitem{Ve} Veretennikov A. Yu.: On the strong solutions of stochastic differential equations. {\it Theory Probab. Appl.}  {\bf 24} (1979), 354-366.
          \bibitem{WZK1} Wong E. and Zakai M.: On the convergence of ordinary integrals to stochastic integrals. {\it Ann. Math. Statist.} {\bf 36} (1965) 1560-1564.
               \bibitem{WZK2} Wong E. and Zakai M.: Riemann-Stieltjes approximations of stochatic integrals, {\it Z. Wahrscheinlichkeitstheorie verw. Geb.} {\bf 12} (1969) 87-97.
         \bibitem{XXZZ}Xia P., Xie L., Zhang  X.  and Zhao G.: $L^q(L^p)$-theory of stochastic differential equations. {\it Stochastic Process. Appl.} {\bf 130} (2020) 5188-5211.
            \bibitem{XZ2}Xie L. and Zhang X.: Ergodicity of stochastic differential equations with jumps and singular coefficients. {\it Ann. Inst. H. Poincar\'e Probab. Statist.} {\bf 56} (2020) 175-229.
\bibitem{Zhang2011} Zhang X.: Stochastic homeomorphism flows of SDE with singular drifts and Sobolev diffusion coefficients.  {\it Electron. J. Probab.} {\bf 16} (2011) 1096-1116.
    \bibitem{Zhang2017} Zhang X.: Stochastic differential equations with Sobolev diffusion and singular drift and applications. {\it Ann. Appl. Probab.} {\bf 26} (2016) 2697-2732.
        \bibitem{ZZ} Zhang X and Zhao G. Heat kernel and ergodicity of SDEs with distributional drifts. arXiv preprint arXiv:1710.10537, (2017).
    	\bibitem{Z} Zvonkin A. K.: A transformation of the phase space of a diffusion process that removes the drift. {\it Math. Sbornik} {\bf 135} (1974) 129-149.
\end{thebibliography}
	\end{document}